\date{}
\newtheorem{theo}{Theorem}
\newtheorem{lem}[theo]{Lemma}
\newtheorem{prop}[theo]{Proposition}
\newtheorem{defi}{Definition}
\newtheorem{rem}{Remark}
\newcommand{\beq}{\begin{equation}}
\newcommand{\eeq}{\end{equation}}
\newcommand{\h}{{\mathbb{H}}}
\newcommand{\Z}{{\mathbb{Z}}}
\newcommand{\R}{{\mathbb{R}}}
\newcommand{\tq}{\ |\ }
\newcommand{\type}{\LARGE{ \in}}
\begin{document}

\title[The Borsuk-Ulam theorem for 
the Seifert manifolds]{The Borsuk-Ulam theorem for 
the Seifert manifolds having   Flat geometry} 

\footnote{This work is part of the Projeto tem\'atico Topologia Alg\'ebrica e
Geom\'etrica  FAPESP 
%??/?????-?The author received financial support from 
%FAPESP Projetos Tem\'aticos Topologia Alg\'ebrica, Geom\'etrica e Diferencial 2012/24454-8 and 
2016/24707-4 }

\author{A. Bauval, D. L.\ Gon\c calves and  C. Hayat } 

%\today

\begin{abstract}

\noindent

\emph{Let $M$ be a Seifert manifold  which belongs to  the geometry     Flat. In this 
work we determine   all the free involutions $\tau$ on $M$, and the  Borsuk-Ulam indice of
$(M,\tau)$. } 

\end{abstract}

\maketitle

\section{Introduction}   Given a  pair  $(M, \tau)$, where $\tau$ is a free involution on the space $M$, 
  the following generalization of the question raised by Ulam, has been studied:   {\it Given 
   $(M, \tau)$ determine all integers  $n>0$ such that  every map $f:M \to \R^n$ has the property that 
   there  is an $x\in M$ such that $f(x)=f(\tau(x))$}. When $n$  belongs   to this family we then  say 
   that  {\it     $(M, \tau)$ has the Borsuk-Ulam property with respect to  maps into $\R^n$, or  $(M,\tau)$ verifies the  Borsuk-Ulam theorem for $\R^n$.}  The greatest integer $n$ such that \ $(M,\tau)$ verifies the  Borsuk-Ulam theorem for $\R^n$ is called the $\Z_2$-index of $((M,\tau);M/\tau)$.
  
 In the introductions of  \cite{adcp2} an \cite{ghz}, 
 the problem of determine the values $n$ for which  $(M,\tau)$ verifies the  Borsuk-Ulam theorem for $\R^n$  is explained in more details, and  some historical notes about this problem    including the information about recent results are provided.  The work \cite{adcp2} study the above   problem  
  for the 
   families of Seifert  3-manifolds having  geometries  
either  $S^2\times \R$ or   $S^3$.  This work is a natural continuation of \cite{adcp2} where here we consider  the family of  the 
Flat 3-manifolds. These manifolds  have the 3-torus as a finite covering. They
are classified as the ten Flat Riemann manifolds \cite{w}. 
They are described as Seifert manifolds in \cite{orlik},  subsection 8.2  where six are
orientable  and four non-orientable.

Our method consists in determining  the pairs $(M, \tau)$, where $M$ runs over the double coverings of a given manifold $N$, and it  makes use of the group theoretic Reidemeister-Schreier method, 
\cite{mks} Chapitre 2 Section 2.3, \cite{ZVC} page 24. 

 The involution $\tau$  is then the involution  associated to the double covering.   The answer  to the  Borsuk-Ulam property is obtained  by cohomological properties  of the  cohomology class of $N$ determined by   the epimorphism of fundamental groups deduced from the involution.

Applying our method to  the ten Flat manifolds we obtain  the  finite number of involutions  on each of them and we built 
the graph, see FIGURE \ref{fig:flat} which incorporates the projection associated to the double covering and the associated  $\Z_2$-index. The  statement of the main result, Theorem \ref{them:flat}, is obtained reading this graph.

\section{Preliminaries}

Let us recall some terminology and known results useful in this paper.

\begin{defi} \label{defi:equiv0} We say that $(M_1,\tau_1)$ and $(M_2,\tau_2)$  are equivalent if
there is     a homeomorphism $h\colon M_1 \to M_2$ which is equivariant with respect to the $\Z_2$ free actions
provided by the involutions.
\end{defi}

In this article, when we say  that $M$ admits a {\it unique involution}, it means unique up to this equivalence.

The following lemma gives a criterion to decide when two double coverings are equivalent under certain hypothesis 
on the quotient manifolds.

\begin{lem} \label{lem:equi}   Let $(M_i, \tau_i)$ $i=1,2$,  be a pair such that $\tau_i $ is a free involution on  a 3-manifold $M_i$ and  
  $\varphi_i$ the associated epimorphism $\varphi_i\colon\pi_1(N_i)
\twoheadrightarrow\Z_2$. We have: \\
a) If $(M_1,\tau_1)$ is equivalent to $(M_2,\tau_2)$ then there exists an  isomorphism  $\theta:\pi_1(N_1)\to \pi_1(N_2)$ 
such that $\varphi_1=\varphi_2\circ\theta$.\\
b) If $N_i$ $i=1,2$  have the property that    any isomorphism  of the fundamental group of these manifolds is realizable as a homeomorphism of the manifolds,  then $(M_1,\tau_1)$ is equivalent to $(M_2,\tau_2)$  if there is  an isomorphism $\theta:\pi_1(N_1)\to \pi_1(N_2)$ such that  $\varphi_1=\varphi_2\circ\theta$.\\
\end{lem}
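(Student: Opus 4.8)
The plan is to argue throughout via the classical correspondence: a connected double covering of a manifold $N$ is determined, up to isomorphism over $\mathrm{id}_N$, by its classifying epimorphism $\pi_1(N)\twoheadrightarrow\Z_2$, equivalently by an index-two subgroup of $\pi_1(N)$. Here $N_i=M_i/\tau_i$, the projection $p_i\colon M_i\to N_i$ has $\tau_i$ as its unique nontrivial deck transformation, and $\varphi_i$ is characterized by $\ker\varphi_i=(p_i)_*\pi_1(M_i)$; note $M_i$ is connected precisely because $\varphi_i$ is onto, so the correspondence applies. For part a), starting from an equivariant homeomorphism $h\colon M_1\to M_2$ (so $h\circ\tau_1=\tau_2\circ h$, whence also $h^{-1}\circ\tau_2=\tau_1\circ h^{-1}$), I let it descend to a homeomorphism $\bar h\colon N_1\to N_2$ with $p_2\circ h=\bar h\circ p_1$, and set $\theta=\bar h_*$. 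The commuting square gives $\theta\bigl((p_1)_*\pi_1(M_1)\bigr)=(p_2)_*h_*\pi_1(M_1)=(p_2)_*\pi_1(M_2)$, so $\theta$ carries $\ker\varphi_1$ onto $\ker\varphi_2$; then $\varphi_2\circ\theta$ is an epimorphism onto $\Z_2$ with kernel $\ker\varphi_1$, hence equals $\varphi_1$, an index-two subgroup being the kernel of a unique epimorphism to $\Z_2$.

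For part b), I start from an isomorphism $\theta$ with $\varphi_1=\varphi_2\circ\theta$ and use the hypothesis to pick a homeomorphism $g\colon N_1\to N_2$ realizing $\theta$; one only controls $g_*$ up to an inner automorphism and a basepoint change, but since $\Z_2$ is abelian this does not affect the composite, so $\varphi_2\circ g_*=\varphi_1$ holds anyway. I then pull the covering $p_2$ back along $g$: the covering $g^*M_2\to N_1$ is classified by $\varphi_2\circ g_*=\varphi_1$, hence is isomorphic over $\mathrm{id}_{N_1}$ to $p_1\colon M_1\to N_1$, while the canonical map $g^*M_2\to M_2$ lying over $g$ is a homeomorphism because $g$ is invertible. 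Composing yields a homeomorphism $h\colon M_1\to M_2$, and since the nontrivial deck transformation of $g^*M_2\to N_1$ is $(x,m)\mapsto(x,\tau_2 m)$ --- corresponding to $\tau_1$ under the first identification and to $\tau_2$ under the second --- one obtains $h\circ\tau_1=\tau_2\circ h$, i.e. $(M_1,\tau_1)$ and $(M_2,\tau_2)$ are equivalent.

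I expect the main friction to be the basepoint/inner-automorphism ambiguity inherent in ``every isomorphism is realizable by a homeomorphism'': the argument must be run at the level of subgroups (equivalently, kernels of maps to $\Z_2$) rather than of the homomorphisms themselves, which is precisely why the triviality of $\mathrm{Aut}(\Z_2)$ and the commutativity of $\Z_2$ are each invoked once. A secondary point not to be skipped is verifying that the identification $g^*M_2\cong M_2$ intertwines the deck involutions --- the standard fact that pullback of a covering along a homeomorphism is a homeomorphism of total spaces commuting with the deck groups --- since that is what makes the resulting $h$ equivariant rather than merely a homeomorphism.
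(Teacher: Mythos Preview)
Your proof is correct and tracks the paper's argument closely. For part a) both you and the paper descend the equivariant homeomorphism to $\bar h\colon N_1\to N_2$ and check that $\bar h_*$ carries $\ker\varphi_1$ to $\ker\varphi_2$; you phrase this cleanly in terms of kernels while the paper does the same element by element. For part b) the paper realizes $\theta$ by a homeomorphism $\bar h$ and then invokes the lifting criterion (Massey) to lift $\bar h\circ p_1$ through $p_2$, whereas you pull $p_2$ back along $g$ and identify the result with $p_1$; these are two packagings of the same covering-space fact. Your version is actually more careful on two points the paper leaves implicit: you address the basepoint/inner-automorphism ambiguity in ``realizing $\theta$ by a homeomorphism'' (harmless since $\Z_2$ is abelian), and you explicitly verify that the resulting lift is a homeomorphism intertwining $\tau_1$ and $\tau_2$, which the paper's proof stops short of spelling out.
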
  
\begin{proof}  a)  Let $p_i\colon M_i\to M_i/\tau_i=N_i$ be the projection of the double covering, if there is a homeomorphism $h\colon M_1 \to M_2$ such that $h\circ\tau_1=\tau_2\circ h$, then 
there exists a homeomorphism $\bar h\colon N_1\to N_2$ such that $\bar h\circ p_1=p_2\circ h.$\\
Let ${\bar x}\in \pi_1(N_1)$  such that $\varphi_1({\bar x})=0$. Hence there exists $x\in \pi_1(M_1)$ such that ${\bar x}=p_{1 \sharp}(x)$ and $\varphi_2\circ \bar h_\sharp ({\bar x})=\varphi_2\circ \bar h_\sharp \circ p_{1 \sharp}(x)=\varphi_2\circ  p_{2 \sharp}\circ h_\sharp(x)=0.$
\\
Let ${\bar x}\in \pi_1(N_1)$  such that $\varphi_1({\bar x})=1$ and let us suppose that $\varphi_2\circ \bar h_\sharp ({\bar x})=0$.  Hence there exists $y\in \pi_1(M_2)$ such that $\bar h_\sharp({\bar x})= p_{2 \sharp}(y)$. By hypothesis $h_\sharp $ and $\bar h_\sharp$ are isomorphisms, hence there exists $u\in \pi_1(M_1)$ such that $y=h_\sharp(u)$ and from the equalities $\bar h_\sharp({\bar x})= p_{2 \sharp}\circ h_\sharp(u)=\bar h_\sharp \circ p_{1 \sharp}(u)$ it follows ${\bar x}=p_{1 \sharp}(u)$ and $\varphi_1({\bar x})=0$ which is a contradiction. Hence the conclusion $\varphi_1=\varphi_2\circ \bar h_\sharp$.\\

b) Let us suppose that there exists a homeomorphism $\bar h\colon N_1\to N_2$ such that $\theta=\bar h_\sharp$. Using, for example \cite{mas}, Theorem 5.1, page 156, we know that $\bar h\circ p_1$ admits a lifting $h$ through $p_2$ such that $\bar h\circ p_1= p_2\circ h$ if $(\bar h \circ p_1)_\sharp(\pi_1(M_1))\subset p_{2 \sharp}(\pi_1(M_2))$.\\
Let $x\in \pi_1(M_1)$, by hypothesis we have
 $\varphi_2\circ \bar h_\sharp(p_{1\sharp}(x))=\varphi_1(p_{1\sharp}(x))=0.$\\
 Hence there exist  $y\in \pi_1(M_2)$ such that $\bar h_\sharp(p_{1\sharp}(x))=p_{2 \sharp}(y)$. This proves that 
 $(\bar h\circ p_1)_\sharp(\pi_1(M_1))\subset p_{2 \sharp}(\pi_1(M_2))$. The other inclusion is  obtained by the same method.\\
 \end{proof}

From \cite{ghz},  Theorems (3.1) and (3.2),  we have:
 
\begin{theo} \label{thm:p2}  Let $(M, \tau)$ be a pair such that $\tau $ is a free involution on  a 3-manifold $M$  and  denote by  $\varphi $ the associated epimorphism $\varphi\colon\pi_1(N)
\twoheadrightarrow\Z_2$ where $N=M/\tau$. 
\noindent
(i) \   One has   $\Z_2$-index of $((M,\tau);N)$ equals 1 if and only if the 
homomorphism $\varphi\colon\pi_1(N)\twoheadrightarrow\Z_2$  factors through the 
projection $\Z\twoheadrightarrow\Z_2 $.  \\
\noindent
(ii) \  One has  $\Z_2$-index of $((M,\tau);N)$ equals 3 if and only if   the cup-cube $[\varphi]^3 \not= 0$ where $
[\varphi]$ is the class of $\varphi$ in $H^1(N;\Z_2)$.
\end{theo}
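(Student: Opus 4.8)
We indicate how one would prove the theorem (this is essentially the argument of \cite{ghz}). The plan is to rephrase the Borsuk-Ulam property as a non-existence statement for equivariant maps into spheres, and then as an obstruction-theoretic lifting problem for the classifying map of the double covering $p\colon M\to N$. Write $c\colon N\to\R P^\infty=B\Z_2$ for a classifying map of $p$; it is characterised up to homotopy by inducing $\varphi$ on fundamental groups, equivalently by $c^\ast t=[\varphi]$ where $t\in H^1(\R P^\infty;\Z_2)$ is the generator. The elementary starting point is that $(M,\tau)$ has the Borsuk-Ulam property for $\R^n$ if and only if there is no $\Z_2$-equivariant map $M\to S^{n-1}$ (antipodal action on the sphere): pass from $f$ to $x\mapsto (f(x)-f(\tau x))/\|f(x)-f(\tau x)\|$, and conversely an equivariant $g\colon M\to S^{n-1}$ is a map to $\R^n$ with $g(\tau x)=-g(x)\ne g(x)$. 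Moreover such an equivariant map exists precisely when $c$ factors up to homotopy through $\R P^{n-1}\hookrightarrow\R P^\infty$ for $n\ge 3$, respectively through $\R P^1=S^1=K(\Z,1)$ for $n=2$, since $\Z_2$-maps $M\to S^{n-1}$ correspond to maps $N\to\R P^{n-1}$ pulling the covering $S^{n-1}\to\R P^{n-1}$ back to $p$. Composing with equatorial inclusions shows that failure of the property is upward closed in $n$, so the index is the length of the initial interval on which it holds. As $\varphi$ is onto, $M$ is connected, so there is no equivariant map to $S^0$ and the index is $\ge 1$; and as $N$ is a closed $3$-manifold, cellular approximation compresses $c$ into the $3$-skeleton $\R P^3$ of $\R P^\infty$, giving an equivariant map to $S^3$, so the index is $\le 3$. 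Hence the index lies in $\{1,2,3\}$, and (i), (ii) identify its extreme values.

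For (i): the index equals $1$ iff the property fails for $\R^2$, iff $c$ factors up to homotopy through $\R P^1=S^1=K(\Z,1)$. A factorisation through $S^1$ is the datum of a class $\psi\in\mathrm{Hom}(\pi_1(N),\Z)=H^1(N;\Z)$; it produces a factorisation of $c$ iff the composite $\pi_1(N)\xrightarrow{\psi}\Z\to\Z_2$ with the reduction mod $2$ equals $\varphi$ (the inclusion $\R P^1\hookrightarrow\R P^\infty$ being reduction mod $2$ on $\pi_1$), and conversely any such $\psi$ is realised by a map $N\to S^1$ whose composite with $S^1\hookrightarrow\R P^\infty$ induces $\varphi$ and is therefore homotopic to $c$. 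Thus the index is $1$ iff $\varphi$ factors through $\Z\twoheadrightarrow\Z_2$.

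For (ii): the index equals $3$ iff the property holds for $\R^3$, iff $c$ does not factor up to homotopy through $\R P^2\hookrightarrow\R P^\infty$. The key move is to identify this inclusion, up to homotopy, with the sphere bundle $S(3L)\to B\Z_2$ of three copies of the tautological line bundle $L$, using $S^\infty\times_{\Z_2}S^2\simeq\R P^2$. Pulling back along $c$, a factorisation of $c$ through $\R P^2$ is exactly a nowhere-zero section of the rank-$3$ bundle $\xi:=3\lambda$ on $N$, where $\lambda$ is the real line bundle with $w_1(\lambda)=[\varphi]$; and since $\dim N=3=\mathrm{rk}\,\xi$, such a section exists iff the twisted Euler class $e(\xi)\in H^3(N;\Z_\varphi)$ vanishes, where $\Z_\varphi$ is $\Z$ with $\pi_1(N)$ acting by $\pm1$ through $\varphi$. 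Its reduction mod $2$ is the top Stiefel-Whitney class $w_3(\xi)=[\varphi]^3$ (since $w(\xi)=(1+[\varphi])^3$), so $[\varphi]^3\ne 0$ forces $e(\xi)\ne 0$ and hence the index to be $3$. For the converse: $e(\xi)=e(\lambda)^3$ and the Euler class of a real line bundle is $2$-torsion (the universal case being $H^1(B\Z_2;\Z_-)=\Z/2$), so $e(\xi)$ is $2$-torsion; and by twisted Poincaré duality $H^3(N;\Z_\varphi)\cong H_0(N;\Z_{[\varphi]+w_1(N)})$, which is $\Z/2$ when $[\varphi]\ne w_1(N)$ and $\Z$ when $[\varphi]=w_1(N)$, so the reduction mod $2$ is injective on the $2$-torsion subgroup in both cases. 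Therefore $e(\xi)=0\iff w_3(\xi)=[\varphi]^3=0$, which proves (ii).

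The step I expect to be the main obstacle is the converse half of (ii): showing that the full obstruction to compressing $c$ into $\R P^2$ is detected by $[\varphi]^3$, with no residual integral part. The substance lies in the homotopy identification of $\R P^2\to\R P^\infty$ with $S(3L)\to B\Z_2$ (so that the obstruction becomes a recognisable Euler class), in the $2$-torsion property of $e(\lambda)$, and in the Poincaré-duality computation of $H^3(N;\Z_\varphi)$. The subcase $[\varphi]=w_1(N)$ — which is exactly the case $N$ non-orientable but $M$ orientable — deserves separate attention: there $H^3(N;\Z_\varphi)\cong\Z$ is torsion-free, so $e(\xi)=0$ automatically, consistently with $[\varphi]^3=w_1(N)^3=0$ (by the Wu relation, since $\mathrm{Sq}^1$ on $H^2(N;\Z_2)$ is cup product with $w_1(N)$); equivalently one may observe that $3\lambda\cong TN$ and $\chi(N)=0$.
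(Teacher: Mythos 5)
The paper does not actually prove this statement: it is imported verbatim from \cite{ghz} (Theorems 3.1 and 3.2), so there is no in-text argument to compare yours against. Your reconstruction is correct and follows the same general strategy as the cited source --- translate the Borsuk--Ulam property for $\R^n$ into the non-existence of a $\Z_2$-equivariant map $M\to S^{n-1}$, hence into the non-compressibility of the classifying map $c\colon N\to\R P^\infty$ into $\R P^{n-1}$, with the bounds $1\le\mathrm{index}\le 3$ coming from connectedness of $M$ and $\dim N=3$. Part (i) is then immediate from $\R P^1=K(\Z,1)$ and the fact that maps into $K(\Z_2,1)$ are classified by the induced map on $\pi_1$. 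For part (ii) you correctly isolate the only delicate point, namely the converse (that $[\varphi]^3=0$ forces a compression into $\R P^2$), and your treatment of it is sound: identifying $\R P^2\hookrightarrow\R P^\infty$ with the sphere bundle of $3L$ turns the compression into a nowhere-zero section of $3\lambda$ over the $3$-complex $N$, whose sole obstruction is the twisted Euler class $e(3\lambda)\in H^3(N;\Z_\varphi)$; this class is $2$-torsion, reduces mod $2$ to $w_3(3\lambda)=[\varphi]^3$, and the mod-$2$ reduction is injective on torsion because $H^3(N;\Z_\varphi)$ is $\Z/2$ or $\Z$ by twisted Poincar\'e duality. Your separate handling of the case $[\varphi]=w_1(N)$ (where $H^3(N;\Z_\varphi)\cong\Z$ and both $e$ and $w_1(N)^3$ vanish) closes the one loophole in the ``iff''. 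The only implicit hypothesis worth flagging is that $N$ must be closed (and $M$ connected) for the skeletal bound, the duality computation, and indeed for the statement itself to make sense; this is satisfied throughout the paper's applications.
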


Now and in all the following,  $M$ is a Seifert manifold in the classique sens of Seifert \cite{s}. 
Following the notation of  Orlik \cite{orlik}
$M$ will be 
described by a list of Seifert invariants  $$\{b; (\type, g); (a_1,b_1),\ldots ,(a_n,b_n)\}$$
where $b$ is an integer,  $g$ is the genus of the base 
surface (the orbit space obtained by identifying each $S^1$  fibre of $M$ to a point),  for each $k$, the integers $a_k,b_k$ are coprime. The type  $\type\in \{o_1,o_2,n_1,n_2,n_3\}$  reflects the orientations of the base and the total space of the Seifert fibration of $M$.

 We shall   use the following presentation of the fundamental 
group  of $M=\{b; (\type, g); (a_1,b_1),\ldots ,(a_n,b_n)\}$:
\begin{eqnarray}
\pi_1(M)=\left<{\begin{matrix}s_1,\ldots , s_n\\v_1,\ldots  ,v_{g'}\\h\end{matrix}\left|
\begin{matrix}
 [s_k,h]\quad\text{and}\quad s_k^{a_k}h^{b_k},&1\le k\le n\\
v_jhv_j^{-1}h^{-\varepsilon_j},&1\le j \le g'\\
s_1\ldots s_nVh^{-b}&\end{matrix}\right.}\right>, \label{BB}
\end{eqnarray}
where the generators and $g', V$ are described below.

\begin{itemize}
\item The orientability of the base surface and its genus $g$ determine the number $g'$ 
of the generators $v_j$'s and the word $V$ in the last relator of $\pi_1(M)$ as follows:\\
- when the base surface is orientable, $g'=2g$ and  $V=[v_1,v_2]\ldots[v_{2g-1},v_{2g}]$;\\
- when the base surface is non-orientable, $g'=g$ and  $V=v_1^2\ldots v_g^2$.
\item The generator $h$ corresponds to the generic regular fibre.
\item The generators $s_k$ for $1\le k\le n$ correspond to exceptional fibres.
\end{itemize}

\begin{defi}\label{defi:cd}  Let $d$ denote the number of indices $j$ such that $a_j$ is even,
 and $$c=ba+\sum_{j=1}^n b_j(a/a_j),$$
where $a$ denotes the least commun  multiple of the $a_j$'s. In the case where  the total space is non-orientable 
we  let $b$ be $0$ or $1$. 
\end{defi}

 In \cite{adcp1} the following proposition is proved.

\begin{prop}\label{prop:cube}   Let $[\varphi]\in H^1(N;\Z_2)$, where $N$ is a Seifert manifold and $\hat\gamma$ is the generator of $H^3(N;\Z_2)$.
\begin{itemize}
\item  Case 1 : $d=0$ and $c$ even, $[\varphi]^{3}=$
	\begin{itemize}
	\item[1)] $\varphi(h)\frac c2\hat\gamma$ when $N$ is of type $o_1$,
	\item[2)] $\varphi(h)(\frac c2+\sum\varphi(v_i))\hat\gamma$ when $N$ is of type $o_2$ or $n_1$,
	\item[3)] $\varphi(h)(\frac c2+g)\hat\gamma$ when $N$ is of type $n_2$,
	\item[4)] $\varphi(h)(\frac c2+\varphi(v_1)+g-1)\hat\gamma$ when $N$ is of type $n_3$,
	\item[5)] $\varphi(h)(\frac c2+\varphi(v_1)+\varphi(v_2)+g-2)\hat\gamma$ when $N$ is of type $n_4$.
	\end{itemize}
\item Case 2 : $d=0$ and $c$ odd, $[\varphi]^{3}=0$.
\item  Case 3 : $d>0$,
$[\varphi]^{3}=(\sum_1^n\varphi(s_j)\frac{a_j}2)\hat\gamma$.
\end{itemize}
\end{prop}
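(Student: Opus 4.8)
The plan is to compute $[\varphi]^3 \in H^3(N;\Z_2) \cong \Z_2$ directly from the presentation \eqref{BB} by building a small projective resolution of $\Z_2$ over $\Z_2[\pi_1(N)]$, or equivalently by working with the cellular cochain complex of the Seifert manifold $N$ in $\Z_2$-coefficients. Since $N$ is closed and (in the relevant cases) aspherical, $H^*(N;\Z_2) = H^*(\pi_1(N);\Z_2)$, and one has a CW structure with one $0$-cell, generators of the presentation as $1$-cells, relators as $2$-cells, and a single $3$-cell. The class $[\varphi] \in H^1(N;\Z_2) = \mathrm{Hom}(\pi_1(N),\Z_2)$ is represented by the cochain sending a $1$-cell to the value of $\varphi$ on the corresponding generator; the first step is therefore to pin down exactly which $\Z_2$-valued $1$-cochains are cocycles, i.e. which assignments $\varphi(s_k),\varphi(v_j),\varphi(h)$ are compatible with the relators $[s_k,h]$, $s_k^{a_k}h^{b_k}$, $v_j h v_j^{-1}h^{-\varepsilon_j}$, and $s_1\cdots s_n V h^{-b}$. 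The commutator relators are automatic over $\Z_2$; the relator $s_k^{a_k}h^{b_k}$ forces $a_k\varphi(s_k) + b_k\varphi(h) = 0 \bmod 2$, which is exactly where the parity of $a_k$ (hence the quantity $d$) enters; and the long relator gives one more linear condition. This bookkeeping is routine but must be done carefully, keeping track of the $o_1/o_2/n_i$ distinction via the word $V$.

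Next I would compute the cup products. The cleanest route is Poincar\'e duality plus the standard formula for cup products on a space with this kind of cell structure: $[\varphi]\cup[\varphi]\cup[\varphi]$ evaluated on the fundamental class $[N]$ can be read off from the "diagonal approximation" on the $3$-cell, whose attaching map is determined by the long relator and the conjugation relators. Concretely, $[\varphi]^3 = \lambda \hat\gamma$ where $\lambda \in \Z_2$ is obtained by summing, over the combinatorial terms in a suitable Alexander--Whitney-type evaluation, the products $\varphi(\text{first letter})\varphi(\text{second letter})\varphi(\text{third letter})$ along the boundary word of the top cell after pushing in the $2$-cell relations. In practice this reduces, after using the relations $s_k^{a_k} = h^{-b_k}$ to rewrite powers and using $v_j h v_j^{-1} = h^{\pm 1}$ to move $h$'s past the $v_j$'s, to counting certain self-intersections: the $h$-linked-with-itself contribution coming from the exponent sum $c = ba + \sum b_j(a/a_j)$ (giving the $\varphi(h)\tfrac c2$ term when $d=0$), the contributions from the squares $v_j^2$ or commutators $[v_{2i-1},v_{2i}]$ in $V$ (giving the $\sum\varphi(v_i)$, $+g$, $+g-1$, $+g-2$ corrections according to type), and, when some $a_j$ is even, the contribution of $s_j^{a_j/2}$ behaving like a "half-fibre" (giving the $\sum \varphi(s_j)\tfrac{a_j}{2}$ term in Case 3). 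The case split $d=0$ versus $d>0$, and $c$ even versus odd, is forced because when $d=0$ and $c$ is odd there is no $\Z_2$-cocycle with $\varphi(h)=1$ surviving, or the relevant square already vanishes, so $[\varphi]^3=0$.

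The main obstacle I expect is the explicit diagonal approximation / cup-product computation on the $3$-cell: writing down a $\Z_2[\pi_1(N)]$-equivariant chain-level diagonal for this presentation complex and extracting the coefficient $\lambda$ is where all the genuine work lies, and where sign-free ($\Z_2$) bookkeeping can still be delicate because one must correctly account for every occurrence of each generator in the rewritten top relator, including those introduced when eliminating $s_k^{a_k}$ and the $v_j h v_j^{-1}$ relations. A secondary subtlety is handling the non-orientable types: there $N$ may fail to be $\Z_2$-orientable issues are moot (we use $\Z_2$ coefficients so $H^3(N;\Z_2)=\Z_2$ always for closed connected $3$-manifolds), but the convention fixing $b\in\{0,1\}$ in Definition \ref{defi:cd} must be used consistently, and the word $V$ being $v_1^2\cdots v_g^2$ changes the combinatorics of the top cell.

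Rather than a single uniform computation, I would organise the proof as: (i) reduce to the chain complex of the presentation $2$-complex capped with the $3$-cell; (ii) identify $H^1$ and the admissible $\varphi$; (iii) compute $[\varphi]^2 \in H^2(N;\Z_2)$ via the diagonal approximation, expressing it through the $b_k$, $a_k$, $\varepsilon_j$ and the type; (iv) pair with $[\varphi]$ again, or equivalently use Poincar\'e duality to pair $[\varphi]^2$ with the Poincar\'e dual of $[\varphi]$, to land in $H^3(N;\Z_2)$; and (v) simplify the resulting $\Z_2$-linear expression in the $\varphi$-values into the stated closed forms, case by case. The reference \cite{adcp1} presumably carries out essentially this computation, so the proof here would either quote it or reproduce the key lemma; I would lean on Theorem \ref{thm:p2}(ii) only for the eventual application, not for the proof of this proposition itself.
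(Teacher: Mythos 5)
The first thing to say is that the paper does not prove this proposition at all: it is imported verbatim from \cite{adcp1} (``In \cite{adcp1} the following proposition is proved''), so there is no in-paper argument to compare yours against. Your outline does match, in broad strokes, the strategy of that reference: build a length-$3$ chain complex from the Seifert presentation, identify the admissible cocycles $\varphi$ (which is where $d$ and the parity of $c$ enter), compute the cup product via an explicit equivariant diagonal approximation, and evaluate on the top class. So the approach is the right one.

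As a proof, however, what you have written is a plan with the decisive step missing. You yourself identify the explicit chain-level diagonal on the $3$-dimensional part as ``where all the genuine work lies,'' and then do not carry it out; but every coefficient in the statement --- the $\varphi(h)\frac c2$ term, the type-dependent corrections $\sum\varphi(v_i)$, $g$, $g-1$, $g-2$, and the $\sum\varphi(s_j)\frac{a_j}2$ term in Case 3 --- can only be extracted from that computation, so none of the asserted formulas is actually established. Two subsidiary points also need justification rather than assertion: (i) the presentation $2$-complex of \eqref{BB} capped with a single $3$-cell is not automatically a CW structure on $N$ (nor is $N$ always aspherical); one must either exhibit the geometric cell structure realizing this presentation or construct a genuine free resolution of length $3$ over $\Z_2[\pi_1(N)]$ together with a diagonal on it, which is precisely what \cite{adcp1} does. (ii) Case 2 does not follow merely from ``there is no cocycle with $\varphi(h)=1$'': granting that $d=0$ and $c$ odd force $\varphi(h)=0$, one still has to show $[\varphi]^3=0$ for such $\varphi$, which again requires the completed formulas of Case 1 (all of which carry the prefactor $\varphi(h)$) or a separate argument. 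The honest conclusion is that this proposition should be quoted from \cite{adcp1}, or its proof reproduced in full; the sketch as it stands does not substitute for either.
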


\section{ The Borsuk-Ulam theorem for Flat  manifolds}
 We discuss here the manifolds which have the 3-torus as a finite covering. They
are classified as the ten Flat Riemann manifolds \cite{w}. 
These manifolds are described as Seifert manifolds in \cite{orlik},  subsection 8.2  where six are
orientable $M_1,M_2,M_3,M_4, M_5,M_6$ and four non-orientable $N_1,N_2,N_2,N_3,N_4$. \\

The Seifert invariants of these manifolds are given as follows:

$$M_1=\{0;(o_1,1);\},\ M_2=\{-2;(o_1,0);(2,1),(2,1),(2,1),(2,1)\}=\{0;(n_2,2);\},$$
$$M_3=\{-1;(o_1,0);(3,1),(3,1),(3,1)\},\ M_4=\{-1;(o_1,0);(2,1),(4,1),(4,1)\},$$
$$M_5=\{-1;(o_1,0);(2,1),(3,1),(6,1)\},\ M_6=\{-1;(n_2,1);(2,1),(2,1)\},$$
$$N_1=\{0;(n_1,2);\}=\{0;(o_2,1);\},\ N_2=\{1;(n_1,2);\}=\{1;(o_2,1);\},$$
$$N_3=\{0;(n_3,2);\},\ N_4=\{1;(n_3,2);\}=\{0;(n_1,1);(2,1),(2,1)\}.$$

  Now we state the main result  in terms  of involutions. By a free  involution we mean an equivalent classes of free involutions.

\begin{theo}\label{them:flat}  {\bf A)}  $M_1$ admits four  free involutions  $\tau_1, \tau_2, \tau_3, \tau_4$.

\begin{enumerate}

\item The quotient  $M_1/\tau_1$ is homeomorphic  to $M_1$. The $\Z_2$-index of $((M_1,\tau_1);M_1)$ equals 1. 

\item  The quotient  $M_1/\tau_2$ is homeomorphic  to $M_2$. The $\Z_2$-index of $(M_1,\tau_2);M_2)$ equals 1. 

\item  The quotient  $M_1/\tau_3$ is homeomorphic  to $N_1$. The $\Z_2$-index of $((M_1,\tau_3);N_1)$ equals 1. 

\item  The quotient  $M_1/\tau_4$ is homeomorphic  to $N_2$. The $\Z_2$-index of $((M_1,\tau_4);N_2)$ equals 1. 
\end{enumerate}

\bigskip

  {\bf B)}  $M_2$ admits five  free involutions  $\tau_1, \tau_2, \tau_3, \tau_4, \tau_5$.

\begin{enumerate}

\item The quotient  $M_2/\tau_1$ is homeomorphic  to $M_2$. The $\Z_2$-index of $((M_2,\tau_1);M_2)$ equals 2. 

\item  The quotient  $M_2/\tau_2$ is homeomorphic  to $M_4$. The $\Z_2$-index of $((M_2,\tau_2);M_4)$ equals 1. 

\item  The quotient  $M_2/\tau_3$ is homeomorphic  to $N_3$. The $\Z_2$-index of $((M_2,\tau_3);N_3)$ equals 2.

\item  The quotient  $M_2/\tau_4$ is homeomorphic  to $N_4$. The $\Z_2$-index of $((M_2,\tau_4);N_4)$ equals 2.

\item  The quotient  $M_2/\tau_5$ is homeomorphic  to $M_6$. The $\Z_2$-index of $((M_2,\tau_5);M_6)$ equals 2. 

\end{enumerate}

\bigskip

 {\bf C)}  $M_3$ admits  two free involutions  $\tau_1, \tau_2$.

\begin{enumerate}

\item The quotient  $M_3/\tau_1$ is homeomorphic  to $M_3$. The $\Z_2$-index of $((M_3,\tau_1);M_3)$ equals 1. 

\item  The quotient  $M_3/\tau_2$ is homeomorphic  to $M_5$. The $\Z_2$-index of $((M_3,\tau_2);M_5)$ equals 1. 

\end{enumerate}

\bigskip

 {\bf D)} For $M_4$ we have one  free involutions $\tau$.  
The quotient  $M_4/\tau$ is homeomorphic  to $M_4$. The $\Z_2$-index of $((M_4,\tau);M_4)$ equals 3. 

\bigskip

  {\bf E)}  For $M_5$ and $M_6$  there is no involution.
 
 \bigskip

{\bf F)}   For $N_1$ we have seven  free involutions  $\tau_1, \tau_2, \tau_3, \tau_4, \tau_5, \tau_6, \tau_7$. 

\begin{enumerate}

\item The quotient  $N_1/\tau_1$ is homeomorphic  to $N_1$. The $\Z_2$-index of $((N_1,\tau_1);N_1)$ equals 2. 

\item  The quotient  $N_1/\tau_2$ is homeomorphic  to $N_1$. The $\Z_2$-index of $((N_1,\tau_2);N_1)$ equals 1. 

\item  The quotient  $N_1/\tau_3$ is homeomorphic  to $N_2$. The $\Z_2$-index of $((N_1,\tau_3);N_2)$ equals 1.  

\item  The quotient  $N_1/\tau_4$ is homeomorphic  to $N_3$. The $\Z_2$-index of $((N_1,\tau_4);N_3)$ equals 1.

\item  The quotient  $N_1/\tau_5$ is homeomorphic  to $N_3$. The $\Z_2$-index of $((N_1,\tau_5);N_3)$ equals 2.

\item  The quotient  $N_1/\tau_6$ is homeomorphic  to $N_4$. The $\Z_2$-index of $((N_1,\tau_6);N_4)$ equals 1.  

\item  The quotient  $N_1/\tau_7$ is homeomorphic  to $N_4$. The $\Z_2$-index of $((N_1,\tau_7);N_4)$ equals 2. 

\end{enumerate}

\bigskip

 {\bf G)}   $N_2$ admits one  free involution  $\tau$. 
The quotient  $N_2/\tau$ is homeomorphic  to $N_1$. The $\Z_2$-index of $((N_2,\tau);N_1)$ equals 3. 

\bigskip

 {\bf H)}  $N_3$ admits one  free involution  $\tau$. 
The quotient  $N_3/\tau$ is homeomorphic  to $N_3$. The $\Z_2$-index of $((N_3,\tau);N_3)$ equals 3. 

\bigskip

 {\bf I)}  $N_4$ admits one  free involution  $\tau$.  
The quotient  $N_4/\tau$ is homeomorphic  to $N_3$. The $\Z_2$-index of $((N_4,\tau);N_4)$ equals 2.  

 \end{theo}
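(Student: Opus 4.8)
The plan is to combine the group-theoretic reduction of Lemma~\ref{lem:equi} with the cohomological criterion of Theorem~\ref{thm:p2} and the cup-cube formula of Proposition~\ref{prop:cube}, feeding everything through a systematic Reidemeister--Schreier computation for each of the ten fundamental groups. First I would record the two structural facts that make the reduction work. (1) A connected double cover of a flat $3$-manifold inherits a flat metric, so it is again one of the ten flat $3$-manifolds; and conversely every index-$2$ subgroup of $\pi_1(N)$ determines a connected double cover whose nontrivial deck transformation is a free involution. (2) Each of the ten manifolds satisfies the hypothesis of Lemma~\ref{lem:equi}(b), since any isomorphism between Bieberbach groups is realised by an affine diffeomorphism (Bieberbach's theorems; alternatively these are Haken manifolds and one invokes Waldhausen). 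Hence, by Lemma~\ref{lem:equi}, classifying free involutions on a fixed flat $M_0$ up to equivalence is the same as listing, over all flat $N$ admitting $M_0$ as a connected double cover, the $\mathrm{Aut}(\pi_1(N))$-orbits of epimorphisms $\varphi\colon\pi_1(N)\twoheadrightarrow\Z_2$ whose associated cover is $M_0$.

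Next come the two enumeration steps. For each of the ten $N$ I would take the presentation~(\ref{BB}) with the Seifert invariants listed above, compute $H_1(N;\Z)$ and hence $H^1(N;\Z_2)=\mathrm{Hom}(\pi_1(N),\Z_2)$, and list its nonzero elements $\varphi$; then, for each such $\varphi$, apply Reidemeister--Schreier to~(\ref{BB}) to obtain a presentation of $\ker\varphi=\pi_1(M_\varphi)$, simplify by Tietze transformations, and identify $M_\varphi$ among the ten flat manifolds (via orientability, $H_1(M_\varphi;\Z)$, and finer invariants together with the classification of flat $3$-manifolds when these do not suffice). This produces all the edges $M_\varphi\to N$ of the graph, and in particular shows that $M_5$ and $M_6$ never occur as a double cover, which is assertion~{\bf E)}. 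Finally, for each $N$ I would compute the action of $\mathrm{Aut}(\pi_1(N))$ on $H^1(N;\Z_2)$ and determine its orbits; since the homeomorphism type of $M_\varphi$ is an orbit invariant, grouping by $(M_\varphi,\text{orbit})$ yields, for every flat $M_0$, the precise finite list of its inequivalent free involutions and their quotients — e.g. the four quotients $M_1,M_2,N_1,N_2$ of $M_1=T^3$, the five quotients of $M_2$, and the seven involutions of $N_1$ distributed among the quotients $N_1$ (twice), $N_2$, $N_3$ (twice) and $N_4$ (twice). This is the content of the assertions {\bf A)}--{\bf I)} about the number of involutions and the manifolds $M_0/\tau_i$.

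It remains to compute the $\Z_2$-index of each pair. Fix an involution with quotient $N$ and class $[\varphi]\in H^1(N;\Z_2)$. I would first decide whether $[\varphi]$ lies in the image of the reduction $H^1(N;\Z)\to H^1(N;\Z_2)$, which is exactly the condition that $\varphi$ factor through a surjection $\pi_1(N)\to\Z$, hence, by Theorem~\ref{thm:p2}(i), exactly the condition that the index be $1$. If it does not, I would compute $[\varphi]^3\in H^3(N;\Z_2)$ directly from Proposition~\ref{prop:cube}: read off the invariants $c$ and $d$ of $N$ (Definition~\ref{defi:cd}) and the type and genus from the Seifert list, evaluate $\varphi$ on $h$, the $v_i$'s and the $s_j$'s, and substitute. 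If $[\varphi]^3\neq0$ the index is $3$ by Theorem~\ref{thm:p2}(ii); otherwise, since for a closed $3$-manifold the $\Z_2$-index always belongs to $\{1,2,3\}$, the index is $2$. Running this over the list of the previous step gives every stated index value (index $1$ for all four involutions of $M_1$; index $1$ for the involution of $M_2$ with quotient $M_4$ and index $2$ for the other four; index $3$ for $M_4$, for $N_2$ and for $N_3$; and so on).

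The main obstacle is not the cohomology: once $c,d$ and the values of $\varphi$ are in hand, Step~3 is a mechanical application of the quoted formula. The real work — and the place where mistakes are easy — is the bookkeeping of the enumeration: carrying out Reidemeister--Schreier for every $N$ and every $\varphi$, correctly restoring each resulting presentation to Seifert form and matching it to the right one of the ten flat manifolds (the orientation types $o_1,o_2,n_1,\dots,n_4$ are easy to confuse), and then computing the $\mathrm{Aut}(\pi_1(N))$-orbits precisely enough to neither merge inequivalent involutions nor split equivalent ones.
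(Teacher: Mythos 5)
Your proposal follows essentially the same route as the paper: it reduces equivalence of involutions to $\mathrm{Aut}(\pi_1(N))$-orbits of epimorphisms onto $\Z_2$ via Lemma~\ref{lem:equi}(b) and the Bieberbach rigidity theorem, identifies each double cover by Reidemeister--Schreier, and computes the indices with Theorem~\ref{thm:p2} and Proposition~\ref{prop:cube} --- exactly the content of Propositions~\ref{lem:M1}--\ref{lem:N4} and the assembling argument in the paper's proof of Theorem~\ref{them:flat}. The only caveat is that the reduction needs the quotient $M_0/\tau$ (not just the cover) to be one of the ten flat manifolds, a fact the paper also asserts without proof; your justification states the covering direction rather than this quotient direction, but the needed fact is standard.
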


\subsection{Double coverings and $\Z_2$-indices for Flat manifolds}

 The purpose of this subsection
is to get the graph FIGURE \ref{fig:flat} below where the arrows start from the covering to the base and the number associated with each arrow is the $\Z_2$-index of the covering. This is obtained by proving Propositions 
 from \ref{lem:M1} to \ref{lem:N4}.  This graph is a summary of Theorem 
\ref{them:flat}.

 To state the following propositions, we need to recognize when two double coverings are equivalent in the sense of Definition    \ref{defi:equiv0}. Here our manifolds are the ten Flat manifolds. They verify the Second Bieberbach Theorem \cite{BB1}, \cite {BB2}, \cite{Char}, also \cite{HW}. Hence any automorphism of the fundamental group of these manifolds is realizable as a homeomorphism of the manifold. Therefore we will use the item b) of Lemma \ref{lem:equi}  in the proof of Propositions 5  to 14 to decide when free involutions are equivalent.\\
In the Theorem \ref{them:flat}, to simplify the notations, involutions $\mu_i$  obtained in the following propositions have been reordered and renamed $\tau_j$.  

\begin{figure}[!h]
   \includegraphics[width=9cm]{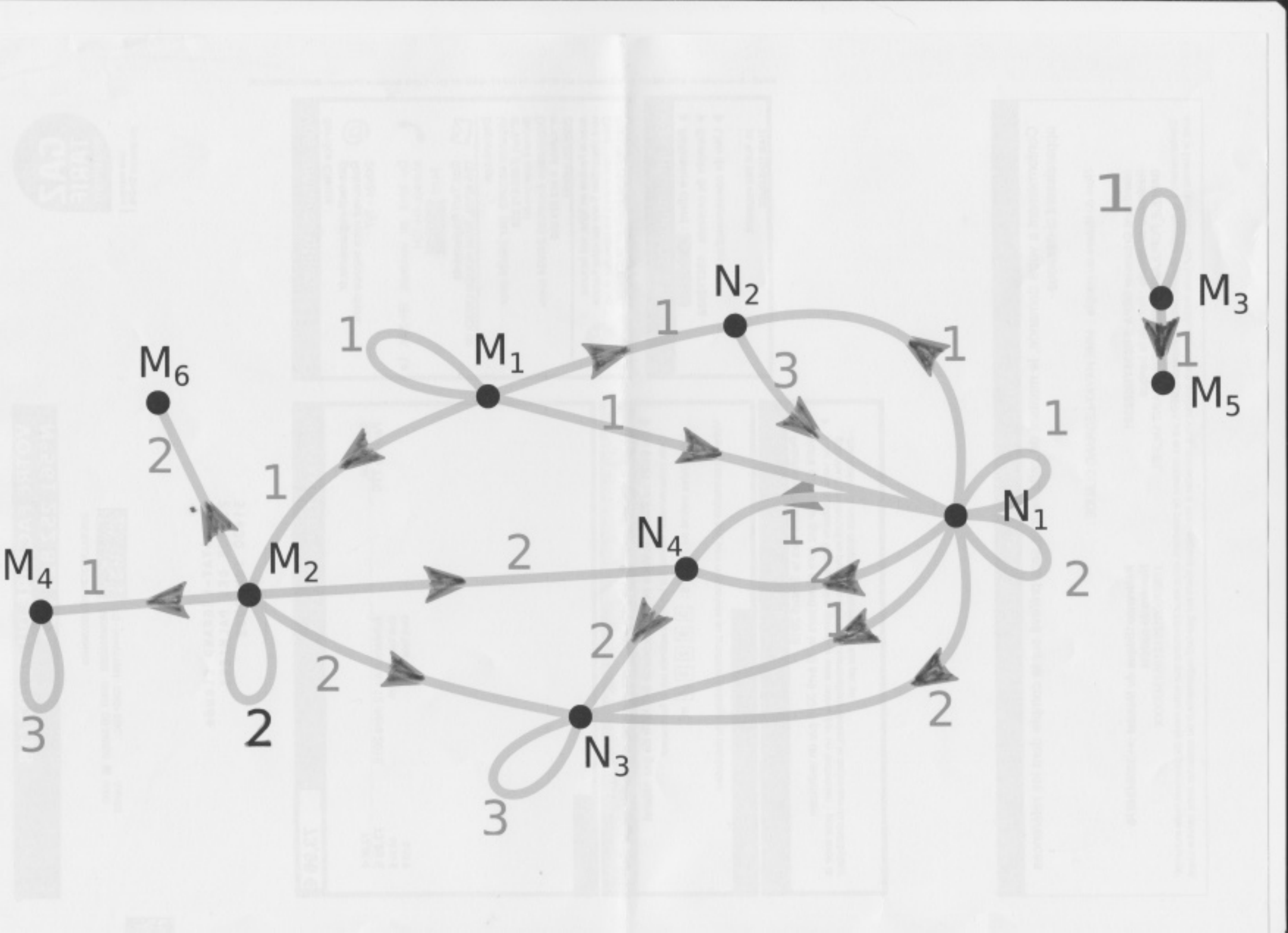}
   \caption{\label{fig:flat} Graph for the family of Flat Seifert manifolds}
  \end{figure}

 We consider the following presentation of $M_1=\{0;(o_1,1);\}$
\begin{eqnarray}
\pi_1(M_1)=\left<{\begin{matrix}v_1,v_2\\h\end{matrix}\left|
\begin{matrix}
v_1hv_1^{-1}h^{-1},&v_2hv_2^{-1}h^{-1}\\
[v_1,v_2]&\end{matrix}\right.}\right>.
\end{eqnarray}

\begin{prop}\label{lem:M1} 
On $M_1$ we have seven  epimorphisms $\varphi: \pi_1(M_1) \to \Z_2$   given by:
$$\begin{array}{lll}
 \varphi_1(v_1)=  1,& \varphi_1(v_2)= 0,&\varphi_1(h)=0\\  
\varphi_2(v_1)= 0,&\varphi_2(v_2)=1,&\varphi_2(h)=0\\  
\varphi_3(v_1)= 1,&  \varphi_3(v_2)=1,&\varphi_3(h)=0\\  
\varphi_4(v_1)= 0,&\varphi_4(v_2)=0,&\varphi_4(h)=1\\ 
\varphi_5(v_1)=1,&\varphi_5(v_2)=0,&\varphi_5(h)=1\\  
\varphi_6(v_1)=0,&\varphi_6(v_2)=1,&\varphi_6(h)=1\\
\varphi_7(v_1)=1,&\varphi_7(v_2)=1,&\varphi_7(h)=1.  
\end{array}$$

 The seven epimorphisms are equivalent.  The associated double covering and free involution form  the pair $(M_1,\mu_1)$.
 
The $\Z_2$-index of $((M_1,\mu_1);M_1)$ equals 1.
 \end{prop}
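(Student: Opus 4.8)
The plan is to first verify that the seven homomorphisms listed are exactly the epimorphisms $\pi_1(M_1)\twoheadrightarrow\Z_2$. Abelianizing the presentation of $\pi_1(M_1)$, the relators $[v_1,v_2]$ and $[v_i,h]$ all die, so $H_1(M_1;\Z)\cong\Z^3$ generated by the classes of $v_1,v_2,h$; hence $\mathrm{Hom}(\pi_1(M_1),\Z_2)\cong(\Z_2)^3$ has eight elements, and removing the trivial one leaves exactly seven epimorphisms, which must be the $\varphi_i$ above since each is a well-defined homomorphism (every relator maps to $0$) and the eight possible value-triples are accounted for. Next I would show the seven are pairwise equivalent in the sense of Definition \ref{defi:equiv0}: by item b) of Lemma \ref{lem:equi} and the Second Bieberbach Theorem (so that automorphisms of $\pi_1(M_1)$ are realized by homeomorphisms), it suffices to produce, for each pair $(\varphi_i,\varphi_j)$, an automorphism $\theta$ of $\pi_1(M_1)$ with $\varphi_i=\varphi_j\circ\theta$. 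Concretely, the mapping-class group of $M_1$ (or just hand-built substitutions like $v_1\mapsto v_1v_2$, $v_1\mapsto v_1h$, $h\mapsto hv_1$, together with permutations of $v_1,v_2,h$, each of which preserves the three defining relators up to the relations) acts on $H_1(M_1;\Z_2)=(\Z_2)^3$, and one checks this action is transitive on the seven nonzero vectors — e.g.\ transpositions of coordinates and the transvection adding one coordinate to another already generate a transitive subgroup of $GL_3(\Z_2)$. This yields a single equivalence class, represented by $(M_1,\mu_1)$, and by standard covering-space theory the associated double cover has total space with $\pi_1$ the index-two kernel, which one identifies again with $\pi_1(M_1)$ (so the quotient is $M_1$, as asserted, although only the index computation is needed here).

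Finally I would compute the $\Z_2$-index using Theorem \ref{thm:p2}. Since all seven $\varphi_i$ are equivalent, it suffices to treat one, say $\varphi_4$ with $\varphi_4(v_1)=\varphi_4(v_2)=0$, $\varphi_4(h)=1$; actually the cleanest is $\varphi_1$ with $\varphi_1(v_1)=1$, $\varphi_1(v_2)=\varphi_1(h)=0$, because then $\varphi_1$ visibly factors through the projection $\Z\twoheadrightarrow\Z_2$ obtained by sending $v_1\mapsto 1$ and killing $v_2,h$ — indeed $\varphi_1$ factors as $\pi_1(M_1)\twoheadrightarrow H_1(M_1;\Z)=\Z^3\xrightarrow{\,\mathrm{pr}_1\,}\Z\twoheadrightarrow\Z_2$. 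By Theorem \ref{thm:p2}(i) this gives $\Z_2$-index equal to $1$, provided we also rule out index $3$, which is automatic once the index is $1$ since the value $2$ is then the remaining case excluded by part (i), and index $3$ is excluded because it would contradict the factorization (or, directly, one checks $[\varphi_1]^3=0$ via Proposition \ref{prop:cube}: $M_1$ is type $o_1$ with no exceptional fibres so $d=0$, $c=0$, and case 1.1) gives $[\varphi_1]^3=\varphi_1(h)\cdot 0\cdot\hat\gamma=0$, indeed $[\varphi]^3=0$ for all seven since $c/2=0$). The main obstacle is the equivalence step: one must be careful that the candidate substitutions are genuinely automorphisms of the \emph{group} $\pi_1(M_1)$ (not merely of its abelianization) — this is where one invokes that $M_1$ is the $3$-torus, whose mapping class group surjects onto $GL_3(\Z)$, so every element of $GL_3(\Z_2)$ lifts to an automorphism; the index computation itself is then immediate from Theorem \ref{thm:p2}(i).
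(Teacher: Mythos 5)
Your proposal is correct and follows essentially the same route as the paper: equivalence of the seven epimorphisms via automorphisms of $\pi_1(M_1)\cong\Z^3$ (coordinate permutations and transvections, realized by homeomorphisms of the $3$-torus), identification of the double cover as $M_1$, and $\Z_2$-index $1$ from the factorization of $\varphi$ through $\Z\twoheadrightarrow\Z_2$ via Theorem \ref{thm:p2}(i). The only superfluous step is your worry about ``ruling out index $3$'': part (i) of Theorem \ref{thm:p2} is an equivalence, so the factorization alone already forces the index to equal $1$.
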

 
 \begin{proof}
Since $\pi_1(M_1)=\Z+\Z+\Z$, using the automorphisms which change the variables, we obtain that $\varphi_1, \varphi_2$ and $\varphi_4$ are equivalent and also  $\varphi_3,\varphi_5$ and $\varphi_6$ are equivalent. Using the automorphism defined on the generators by $v_1\mapsto v_1v_2, v_2\mapsto v_2, h\mapsto h$, we obtain that $\varphi_7$ is equivalent to $\varphi_6$, and $\varphi_3$ is equivalent to $\varphi_2$.

 \bigskip
 
 Reidemeister-Schreier algorithm gives $M_1$ as the total space of the double covering of $M_1$.
 
 \bigskip
 
 Since $\pi_1(M_1)=\Z+\Z+\Z$ is  free abelian,  it is straightforward to see that  $\varphi_1$   factors 
through $\Z \to \Z_2$, hence the $\Z_2$-index of $((M_1,\mu_1);M_1)$ equals 1. \\
\end{proof}

\bigskip

We consider the following presentation of $M_2=\{-2;(o_1,0);(2,1),(2,1),(2,1),(2,1)\}$
\begin{eqnarray}
\pi_1(M_2)=\left<{\begin{matrix}s_1,s_2,s_3, s_4\\h\end{matrix}\left|
\begin{matrix}
 [s_k,h],1\le k\le 4;& s_1^2h, s_2^2h ,s_3^2h, s_4^2h\\
s_1s_2s_3s_4=h^{-2}&\end{matrix}\right.}\right>. 
\end{eqnarray} 

\begin{prop}\label{lem:M2} 
On $M_2$ we have seven  epimorphisms $\varphi: \pi_1(M_2) \to \Z_2$   given by: 
 $$\begin{array}{lllll}
\varphi_1(s_1)=  1,&\varphi_1(s_2)= 1,&\varphi_1(s_3)=0,&\varphi_1(s_4)=0, &\varphi_1(h)=0\\
\varphi_2(s_1)=1,&\varphi_2(s_2)=0,&\varphi_2(s_3)=1,&\varphi_2(s_4)=0,&
\varphi_2(h)=0\\
\varphi_3(s_1)= 1,&\varphi_3(s_2)=0,&\varphi_3(s_3)=0,&\varphi_3(s_4)=1,& \varphi_3(h)=0\\
\varphi_4(s_1)=0,&\varphi_4(s_2)=1,&\varphi_4(s_3)=1,&\varphi_4(s_4)=0,&
\varphi_4(h)=0\\ 
\varphi_5(s_1)=0,&\varphi_5(s_2)=1,&\varphi_5(s_3)=0,&\varphi_5(s_4)=1,&
\varphi_5(h)=0\\ 
\varphi_6(s_1)=0,&\varphi_6(s_2)=0,&\varphi_6(s_3)=1,&\varphi_6(s_4)= 1,& \varphi_6(h)=0\\
\varphi_7(s_1)=1,&\varphi_7(s_2)=1,&\varphi_7(s_3)=1,&\varphi_7(s_4)=1,&
\varphi_7(h)=0. 
 \end{array}$$

The six  first $\varphi's$ are equivalent. The double covering and free involution which correspond to $\varphi _1$ form the pair $(M_2,\mu_1)$. The $\Z_2$-index of $((M_2,\mu_1);M_2)$ equals 2.

  The double covering and free involution which correspond to $\varphi _7$ form the pair $(M_1,\mu_2)$. The $\Z_2$-index of $((M_1,\mu_2);M_2)$ equals 1.

 \end{prop}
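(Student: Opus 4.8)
The plan is to verify each of the three assertions in turn: the list of seven epimorphisms is complete, the equivalence pattern among them, and the two $\Z_2$-index computations. First I would compute $H_1(M_2;\Z)$ from the presentation \eqref{BB} above by abelianizing: the relations $s_k^2h=1$ give $h=-2s_k$ for each $k$ (additively), and the last relation $s_1+s_2+s_3+s_4 = -2h$ becomes $s_1+s_2+s_3+s_4+2h=0$. Eliminating $h$, one finds $H_1(M_2;\Z)\cong\Z^3\oplus\Z_2$ (consistent with $M_2$ being flat), and then $\mathrm{Hom}(H_1(M_2;\Z),\Z_2) = H^1(M_2;\Z_2)\cong\Z_2^4$, giving $15$ nonzero homomorphisms. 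The constraint that forces $\varphi(h)=0$ is that $h = -2s_k$ is a double, so every homomorphism to $\Z_2$ kills $h$; this already cuts the candidates. Imposing surjectivity and working through which of the $15$ survive leaves exactly the seven listed (the vectors $(\varphi(s_1),\dots,\varphi(s_4))$ of even weight that are nonzero), so the enumeration is forced by linear algebra over $\F_2$.

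Next I would establish the equivalences. By the Second Bieberbach Theorem invoked in the text, it suffices by Lemma \ref{lem:equi}\,b) to exhibit, for each pair we claim equivalent, an automorphism $\theta$ of $\pi_1(M_2)$ with $\varphi_i = \varphi_j\circ\theta$. The natural automorphisms to use are the ones permuting $s_1,s_2,s_3,s_4$ (each such permutation extends to an automorphism of the presentation since the four exceptional fibres have identical invariants $(2,1)$ and $h$ is central with $h^{-2}=s_1s_2s_3s_4$, hence fixed up to the relator), and these act on the weight-$2$ vectors $(\varphi(s_i))$ as $S_4$ acts on $2$-element subsets of $\{1,2,3,4\}$ — transitively. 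That transitivity is exactly the statement that $\varphi_1,\dots,\varphi_6$ are all equivalent. The remaining epimorphism $\varphi_7$ has weight $4$ and so lies in a different $S_4$-orbit; I would argue it is genuinely inequivalent to the others by a covering-space invariant — namely the homeomorphism type of the total space. For the index and quotient statements I would run the Reidemeister–Schreier rewriting on the index-two subgroup $\ker\varphi_1$ and on $\ker\varphi_7$. For $\varphi_7$, since $\varphi_7$ is trivial on $h$ and on all commutators, the kernel should abelianize to $\Z^3$ and the computation should identify the total space as $M_1$ (the $3$-torus), giving the pair $(M_1,\mu_2)$; and since $\varphi_7$ factors through $H_1\to\Z\to\Z_2$ (project onto a suitable $\Z$ summand), Theorem \ref{thm:p2}(i) gives $\Z_2$-index $=1$. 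For $\varphi_1$ I would instead apply Proposition \ref{prop:cube}: here $d=0$ (all $a_j=2$ are even — wait, $d$ counts even $a_j$, so $d=4>0$, Case 3 applies), so $[\varphi_1]^3 = \big(\sum_j \varphi_1(s_j)\tfrac{a_j}{2}\big)\hat\gamma = (1+1+0+0)\hat\gamma = 0$, hence by Theorem \ref{thm:p2}(ii) the index is not $3$; combined with the fact that $\varphi_1$ does \emph{not} factor through $\Z\to\Z_2$ (because $\varphi_1$ is nontrivial on the torsion class — indeed no $\Z$-quotient of $H_1(M_2;\Z)=\Z^3\oplus\Z_2$ detects the pattern $(1,1,0,0)$ on the $s_i$ once $h$ is killed, which I would check directly), Theorem \ref{thm:p2}(i) forces the index to be $2$.

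The main obstacle I anticipate is the bookkeeping in the Reidemeister–Schreier step that identifies the total spaces up to homeomorphism: getting from a raw presentation of the index-two subgroup to a recognizable Seifert presentation (here $M_2$ for $\mu_1$ and $M_1$ for $\mu_2$) requires Tietze transformations and then matching against Orlik's normal forms, and it is easy to make sign or indexing errors in the Seifert invariants. A secondary subtlety is the claim that $\varphi_7$ is not equivalent to $\varphi_1$: equivalence of the \emph{pairs} is an $\mathrm{Out}(\pi_1(M_2))$-orbit question, not just an $S_4$-orbit question, so I must be sure no outer automorphism of $\pi_1(M_2)$ mixes the weight-$2$ and weight-$4$ classes; the cleanest way around this is precisely to note the total spaces differ ($M_2\not\cong M_1$, e.g. by $H_1$: $M_1$ is the torus with $H_1=\Z^3$, while the double cover via $\varphi_1$ has nontrivial $2$-torsion or differs in its Seifert structure), which is an equivariant-homeomorphism invariant and so settles inequivalence immediately.
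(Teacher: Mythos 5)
Your overall strategy matches the paper's: enumerate the epimorphisms by abelianizing, realize the equivalences among $\varphi_1,\dots,\varphi_6$ by automorphisms permuting the $s_i$ (the paper uses exactly the braid-type exchange $\theta(s_i)=s_is_{i+1}s_i^{-1},\ \theta(s_{i+1})=s_i$ you allude to), identify the total spaces by Reidemeister--Schreier, apply Case 3 of Proposition~\ref{prop:cube} to get $[\varphi_1]^3=(1+1+0+0)\hat\gamma=0$, and exhibit a lift of $\varphi_7$ through $\Z$ (the paper takes $\psi(s_i)=1$, $\psi(h)=-2$, which is your projection onto the free summand). Leaving the Reidemeister--Schreier identifications as asserted bookkeeping is no worse than the paper, which also just states them.

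There is, however, a concrete error in your first step: $H_1(M_2;\Z)$ is not $\Z^3\oplus\Z_2$. Abelianizing, the relations $2s_k+h=0$ give $h=-2s_1$ and $2(s_k-s_1)=0$, and the long relation gives $(s_2-s_1)+(s_3-s_1)+(s_4-s_1)=0$; hence $H_1(M_2;\Z)\cong\Z\oplus\Z_2\oplus\Z_2$ and $H^1(M_2;\Z_2)\cong\Z_2^3$, with exactly $7$ nonzero classes. Your version is internally inconsistent: if $H^1$ were $\Z_2^4$ there would be $15$ epimorphisms, since \emph{every} nonzero homomorphism to $\Z_2$ is surjective --- ``imposing surjectivity'' cuts nothing. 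The correct reduction to the seven listed maps comes from the relations themselves: $s_k^2h$ forces $\varphi(h)=0$ and the long relation forces $\sum_k\varphi(s_k)=0$, i.e.\ precisely the nonzero even-weight vectors you name at the end. The same correction repairs your non-lifting argument for $\varphi_1$: since the free part of $H_1$ is generated by $s_1$ with all $s_k$ congruent modulo torsion, any homomorphism to $\Z$ sends all $s_k$ to the same integer, so no lift can realize the pattern $(1,1,0,0)$ modulo $2$; this is the cleanest version of the paper's (somewhat terser) argument. With $H_1$ corrected, the rest of your proof goes through and coincides with the paper's.
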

 
 \begin{proof}
 Let us define an automorphism $\theta$  by $\theta(s_i)=s_is_{i+1}s_i^{-1},$  $\theta(s_{i+1})=s_i$, and the identity on the other variables. Its effect is to exchange the generators $s_i$ and $s_{i+1}$.  We obtain  the equivalence of the six  first $\varphi's$.  

 Reidemeister-Schreier algorithm gives $M_2$ as the total space of the double covering of $M_2$ determined by $\varphi_1$.

 Because of the relation $s_3^2h=1$, any lift of  $\varphi_1$ must send $h$ to an even number in $\Z$. The long relation implies that $h$ goes to 0 and this lift is trivial. The epimorphism $\varphi_1$  does not    factor through $\Z \to \Z_2$. 
 
 We are in Case 3 of  Proposition (\ref{prop:cube}), hence $[\varphi_1]^3=0$ and 
the $\Z_2$-index of $((M_2,\mu_1);M_2)$ equals 2.

\bigskip

  Reidemeister-Schreier algorithm gives $M_1$ as the total space of the double covering of $M_2$ determined by $\varphi_7$.

Define a factorization of   $\varphi_7$ by $\psi$ such that
$\psi(s_1)=  1,\psi(s_2)= 1,\psi(s_3)=1,\psi(s_4)=1,\psi (h)=-2$. The $\Z_2$-index of $((M_1,\mu_2);M_2)$ equals 1.

\end{proof}

\bigskip

We consider the following presentation of $M_3=\{-1;(o_1,0);(3,1),(3,1),(3,1)\}$
\begin{eqnarray}
\pi_1(M_3)=\left<{\begin{matrix}s_1,s_2,s_3\\h\end{matrix}\left|
\begin{matrix}
 [s_k,h],1\le k\le 3;& s_1^3h, s_2^3h ,s_3^3h\\
s_1s_2s_3=h^{-1}&\end{matrix}\right.}\right>. 
\end{eqnarray} 

\begin{prop}\label{lem:M3} 
On $M_3$ we have one   epimorphisms $\varphi: \pi_1(M_3) \to \Z_2$    given by:
$$\begin{array}{l}
\varphi(s_1)=1,\varphi(s_2)=1,\varphi(s_3)= 1,\varphi(h)=1
\end{array}$$

The   double covering and free involution form the pair $(M_3,\mu)$. The $\Z_2$-index of $((M_3,\mu);M_3)$ equals 1.
\end{prop}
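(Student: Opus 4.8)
The plan is to handle the three assertions in turn: the count of epimorphisms $\pi_1(M_3)\twoheadrightarrow\Z_2$, the identification of the total space of the associated double covering, and the value of the $\Z_2$-index.

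For the first point I would abelianize the given presentation and reduce modulo $2$. The relators $[s_k,h]$ vanish; each relator $s_k^3h$ becomes $s_k+h=0$ in $\Z_2$ because $3$ is odd; and the long relator $s_1s_2s_3h$ becomes $s_1+s_2+s_3+h=0$. Hence every homomorphism $\varphi\colon\pi_1(M_3)\to\Z_2$ must send $s_1,s_2,s_3,h$ to one and the same element of $\Z_2$, and with that constraint the long relation holds automatically. For $\varphi$ to be onto, that common value must be $1$, so there is exactly one epimorphism, namely the one displayed. Consequently there is, up to the equivalence of Definition \ref{defi:equiv0}, a unique free involution $\mu$ on the corresponding connected double covering, and no automorphism argument is needed here.

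For the second point I would carry out the Reidemeister--Schreier rewriting on $\ker\varphi$ with the Schreier transversal $\{1,s_1\}$, expressing lifts of the generators as Schreier generators. After Tietze reductions the resulting presentation should match the presentation (\ref{BB}) of $M_3=\{-1;(o_1,0);(3,1),(3,1),(3,1)\}$: one expects the new generic fibre to be $h^2$, three new exceptional fibres of order $3$, and Euler data reproducing $M_3$. Once the fundamental groups are seen to be isomorphic, the fact that the ten Flat manifolds satisfy the Second Bieberbach Theorem means the isomorphism is realized by a homeomorphism, so Lemma \ref{lem:equi}(b) identifies the total space with $M_3$ and the pair with $(M_3,\mu)$. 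This bookkeeping --- doing the rewriting cleanly and then recognizing the simplified presentation on the list of Flat manifolds --- is the one genuinely computational step and where I expect the main obstacle.

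For the third point I would apply Theorem \ref{thm:p2}(i): the $\Z_2$-index equals $1$ precisely when $\varphi$ factors through the projection $\Z\twoheadrightarrow\Z_2$. Such a factorization is easy to exhibit here. Let $\psi\colon\pi_1(M_3)\to\Z$ be defined on generators by $\psi(s_1)=\psi(s_2)=\psi(s_3)=1$ and $\psi(h)=-3$. Every relator maps to $0$: clearly $[s_k,h]\mapsto 0$, while $s_k^3h\mapsto 3-3=0$ and $s_1s_2s_3h\mapsto 1+1+1-3=0$, so $\psi$ is a well-defined homomorphism, and composing with reduction modulo $2$ returns $\varphi$ since $1$ and $-3$ are odd. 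Therefore $\varphi$ factors through $\Z\twoheadrightarrow\Z_2$ and the $\Z_2$-index of $((M_3,\mu);M_3)$ is $1$. As a cross-check, $M_3$ has $d=0$ and $c=ba+\sum_j b_j(a/a_j)=(-1)\cdot 3+3\cdot 1=0$, which is even, so Proposition \ref{prop:cube} (Case 1, type $o_1$) gives $[\varphi]^3=\varphi(h)\tfrac{c}{2}\hat\gamma=0$; by Theorem \ref{thm:p2}(ii) the index is not $3$, in agreement with the value $1$.
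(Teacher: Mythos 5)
Your proposal is correct and follows essentially the same route as the paper: a unique epimorphism (which you justify more explicitly, via abelianization mod $2$), Reidemeister--Schreier to identify the total space of the double cover with $M_3$, and the lift $\psi(s_1)=\psi(s_2)=\psi(s_3)=1$, $\psi(h)=-3$ to $\Z$ giving index $1$ by Theorem \ref{thm:p2}(i) --- exactly the factorization the paper uses. The only detail the paper supplies that you leave as ``expected bookkeeping'' is the outcome of that rewriting: it records the Seifert invariants $\{-2;(o_1,0);(3,2),(3,2),(3,2)\}$ of the cover and notes these become those of $M_3$ after a change of orientation.
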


\begin{proof}

Using Reidemeister-Shreier algorithm, we find 
$\{-2;(o_1,0);(3,2),(3,2),(3,2)\}$  as Seifert invariants of  the double covering determined by $\varphi$. After a change of orientation [Seifert],   we recognize those of $M_3$. Hence $M_3$ is the total space of the double covering of $M_3$ determined by $\varphi$.

 Define the factorization of $\varphi$ by  $\psi(h)=-3$ and $\psi(s_1)=\psi (s_2)=\psi(s_3)=1$. The $\Z_2$-index of $((M_3,\mu);M_3)$ equals 1.
  
\end{proof}

\bigskip

We consider the following presentation of $M_4=\{-1;(o_1,0);(2,1),(4,1),(4,1)\}$
\begin{eqnarray}
\pi_1(M_4)=\left<{\begin{matrix}s_1,s_2,s_3\\h\end{matrix}\left|
\begin{matrix}
 [s_k,h],1\le k\le 3;& s_1^2h, s_2^4h ,s_3^4h\\
s_1s_2s_3=h^{-1}&\end{matrix}\right.}\right>. 
\end{eqnarray} 

\begin{prop}\label{lem:M4} 
On $M_4$ we have  three  epimorphisms $\varphi: \pi_1(M_4) \to \Z_2$    given by:
$$\begin{array}{llll}
\varphi_1(s_1)= 1,&\varphi_1(s_2)=1,&\varphi_1(s_3)=0,&\varphi_1(h)= 0\\ 
\varphi_2(s_1)=1,&\varphi_2(s_2)=0,&\varphi_2(s_3)=1,&\varphi_2(h)=0\\ 
\varphi_3(s_1)= 0,&\varphi_3(s_2)=1,&\varphi_3(s_3)=1,&\varphi_3(h)= 0.
\end{array}$$
\begin{enumerate}
\item 
The epimorphisms $\varphi_1, \varphi_2$ are equivalent.  The double covering and free involution which correspond to $\varphi _1$ form the pair $(M_4,\mu)$. The $\Z_2$-index of $((M_4,\mu);M_4)$ equals 3.
\item
 The double covering and free involution corresponding to $\varphi_3$ form  the pair $(M_2,\mu_2)$. The $\Z_2$-index of $((M_2,\mu_2);M_4)$ equals 1.
\end{enumerate}

 \end{prop}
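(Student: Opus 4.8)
The plan is to treat Proposition \ref{lem:M4} by the same uniform procedure already applied to $M_1,M_2,M_3$: first enumerate the epimorphisms $\pi_1(M_4)\twoheadrightarrow\Z_2$, then group them into equivalence classes via automorphisms realized by homeomorphisms (invoking the Second Bieberbach Theorem and Lemma \ref{lem:equi} b)), then run Reidemeister--Schreier on each class to identify the total space of the double covering, and finally compute the $\Z_2$-index by Theorem \ref{thm:p2} together with Proposition \ref{prop:cube}. First I would check that the three listed $\varphi_i$ are exactly the epimorphisms: abelianizing $\pi_1(M_4)$ using $s_1^2h=s_2^4h=s_3^4h=1$ and $s_1s_2s_3=h^{-1}$ forces $\varphi(h)=0$ in $\Z_2$ (from $s_1^2h$, $h$ is a square hence trivial mod $2$), then $\varphi(s_1)+\varphi(s_2)+\varphi(s_3)=\varphi(h)=0$, so a surjection must have exactly two of the $s_i$ mapping to $1$; that gives precisely $\varphi_1,\varphi_2,\varphi_3$.

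Next I would establish the equivalences. For $\varphi_1\sim\varphi_2$ I would write down the automorphism $\theta$ exchanging $s_2$ and $s_3$ in the style already used in Proposition \ref{lem:M2}, namely $\theta(s_2)=s_2s_3s_2^{-1}$, $\theta(s_3)=s_2$, identity on $s_1,h$; one checks it preserves the relators (the commutators with $h$ are preserved since $h$ is central, the torsion relators $s_2^4h,s_3^4h$ are swapped up to conjugation, and the long relator is preserved because $s_1s_2s_3\mapsto s_1(s_2s_3s_2^{-1})s_2=s_1s_2s_3$). Then $\varphi_1\circ\theta=\varphi_2$, giving equivalence by Lemma \ref{lem:equi} b). The reason $\varphi_3$ is \emph{not} equivalent to $\varphi_1$ is that $\varphi_3$ kills $s_1$, the unique $s_i$ with even order $a_1=2$ (the other two have order $4$), an invariant preserved by any automorphism; I would note this to justify that the three epimorphisms split into exactly two classes.

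Then I would carry out the two Reidemeister--Schreier computations with Schreier transversal $\{1,s_1\}$ (using $\varphi_1(s_1)=1$). For the $\varphi_1$-covering I expect to recover Seifert invariants of $M_4$ itself (the exceptional fibres: the order-$2$ fibre $s_1$ is ``unwrapped'' and the two order-$4$ fibres each lift to a single order-$4$ fibre, with $h$ doubling, after a change of orientation à la Seifert), and for the $\varphi_3$-covering — where $s_1\mapsto0$ so the order-$2$ fibre lifts doubly while the two order-$4$ fibres $s_2,s_3$ are unwrapped — I expect Seifert invariants of $M_2=\{-2;(o_1,0);(2,1),(2,1),(2,1),(2,1)\}$. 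For the index of $((M_4,\mu);M_4)$: since $a_1=2$ is even, $d>0$, so we are in Case 3 of Proposition \ref{prop:cube} and $[\varphi_1]^3=\varphi_1(s_1)\tfrac{a_1}{2}\hat\gamma=1\cdot1\cdot\hat\gamma\neq0$, whence by Theorem \ref{thm:p2}(ii) the index is $3$. For $\varphi_3$, I would exhibit a factorization through $\Z\twoheadrightarrow\Z_2$: send $h\mapsto -1$ (forced by $s_1^2h$ once $s_1\mapsto0$ wait — rather) and $s_1\mapsto0$, $s_2\mapsto1$, $s_3\mapsto1$, $h\mapsto -2$, checking all relators hold in $\Z$; then Theorem \ref{thm:p2}(i) gives index $1$.

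The main obstacle I anticipate is the Reidemeister--Schreier bookkeeping: correctly tracking how the exceptional fibres $s_k^{a_k}h^{b_k}$ and the long relator transform under the index-$2$ subgroup, and then recognizing the resulting presentation as a standard Seifert presentation after the (orientation-reversing) normalization of Seifert invariants — in particular confirming that the $\varphi_1$-cover really is $M_4$ and not merely flat of the same Betti numbers. Everything else (the epimorphism count, the equivalences, the cup-cube evaluation, the factorization) is routine given the machinery already set up in the paper.
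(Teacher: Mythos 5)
Your proposal follows essentially the same route as the paper: the same automorphism $\theta(s_2)=s_2s_3s_2^{-1}$, $\theta(s_3)=s_2$ to identify $\varphi_1$ and $\varphi_2$, Reidemeister--Schreier to recognize the two total spaces as $M_4$ and $M_2$, Case 3 of Proposition \ref{prop:cube} for $[\varphi_1]^3=(1\cdot1+1\cdot2+0\cdot2)\hat\gamma=\hat\gamma\neq0$ giving index $3$, and an integer factorization of $\varphi_3$ giving index $1$. Your enumeration of the three epimorphisms and your verification that $\theta$ preserves the relators are both correct and are in fact more detailed than the paper's.

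One concrete error: the lift of $\varphi_3$ you finally settle on, $s_1\mapsto0$, $s_2\mapsto1$, $s_3\mapsto1$, $h\mapsto-2$, is not a homomorphism to $\Z$ --- the relators $s_1^2h$ and $s_2^4h$ map to $-2$ and $2$ respectively. The relations $4\psi(s_2)+\psi(h)=0$ and $2\psi(s_1)+\psi(h)=0$ force $\psi(h)=-4$ and $\psi(s_1)=2$ once $\psi(s_2)=\psi(s_3)=1$, which is the lift the paper uses ($\psi(s_1)=2$, $\psi(s_2)=\psi(s_3)=1$, $\psi(h)=-4$); it reduces mod $2$ to $\varphi_3$ and the conclusion stands. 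The check you yourself prescribe (``all relators hold in $\Z$'') would have caught this. A smaller remark: your argument that $\varphi_3$ is not equivalent to $\varphi_1$ because automorphisms must preserve ``the $s_i$ of even order $a_i=2$'' is shakier than needed (these Bieberbach groups are torsion-free, so this is a statement about exceptional-fibre data, not group-theoretic order); non-equivalence follows at once from the fact that the two classes have different $\Z_2$-indices, or different total spaces.
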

 \begin{proof}
 \begin{enumerate}
\item The generators $s_2$ and $s_3$ are exchanged using the same automorphism $\theta$ of $\pi_1(M_4)$ defined by $\theta(s_1)=s_1,\theta(s_2)=s_2s_3s_2^{-1}, \theta(s_3)=s_2, \theta(h)=h$.    We obtain that $\varphi_1, \varphi_2$ are equivalent. 
 
 Reidemeister-Schreier algorithm gives $M_4$ as the total space of the double covering of $M_4$ determined by $\varphi_1$.

 We are in Case 3 of Proposition  (\ref{prop:cube}) and $[\varphi_2]^3\neq 0$. The $\Z_2$-index of $((M_4,\mu);M_4)$ equals 3.
 
 \bigskip
 
\item    Reidemeister-Schreier algorithm gives $M_2$ as the total space of the double covering of $M_4$ determined by $\varphi_3$.

For $\varphi_3$ define a lift by $\psi(s_1)= 2,\psi(s_2)=1,\psi(s_3)=1,\psi(h)= -4$. The $\Z_2$-index of $((M_2,\mu_2);M_4)$ equals 1. 

\end{enumerate}
\end{proof}

\bigskip

We consider the following presentation of $M_5=\{-1;(o_1,0);(2,1),(3,1),(6,1)\}$
\begin{eqnarray}
\pi_1(M_5)=\left<{\begin{matrix}s_1,s_2,s_3\\h\end{matrix}\left|
\begin{matrix}
 [s_k,h],1\le k\le 3;& s_1^2h, s_2^3h ,s_3^6h\\
s_1s_2s_3=h^{-1}&\end{matrix}\right.}\right>. 
\end{eqnarray} 

\begin{prop}\label{lem:M5} 
On $M_5$ we have  one epimorphisms $\varphi: \pi_1(M_5) \to \Z_2$    given by:
$$\begin{array}{llll}
\varphi(s_1)=1, &\varphi(s_2)=0,& \varphi(s_3)=1,&\varphi(h)=0.
\end{array}$$

The   double covering and free involution form the pair $(M_3,\mu)$. The $\Z_2$-index of $((M_3,\mu);M_5)$ equals 1.
\end{prop}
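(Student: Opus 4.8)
The plan is to establish the three assertions of the proposition in order: that $\pi_1(M_5)$ has a single epimorphism onto $\Z_2$, that the associated double covering has total space $M_3$, and that the $\Z_2$-index equals $1$.

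For the count of epimorphisms, I would compute $\mathrm{Hom}(\pi_1(M_5),\Z_2)=H^1(M_5;\Z_2)$ by reducing the presentation modulo $2$. The relators $s_k^{a_k}h$ give $a_ks_k+h=0$ and the last relator gives $s_1+s_2+s_3+h=0$; with $(a_1,a_2,a_3)=(2,3,6)$ this forces, modulo $2$, first $h\equiv0$, then $s_2\equiv0$, and finally $s_1\equiv s_3$, so $H^1(M_5;\Z_2)\cong\Z_2$. Hence there is exactly one nontrivial homomorphism to $\Z_2$, it is automatically onto, and one checks that it is the displayed $\varphi$: indeed $\varphi(s_1^2h)=0$, $\varphi(s_2^3h)=0$, $\varphi(s_3^6h)=0$ and $\varphi(s_1s_2s_3h)=1+0+1+0=0$ in $\Z_2$. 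Since the epimorphism is unique, the associated pair is well defined up to equivalence.

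For the total space I would run the Reidemeister--Schreier procedure on $\pi_1(M_5)$ with Schreier transversal $\{1,s_1\}$ (valid since $\varphi(s_1)=1$), obtain a presentation of $\ker\varphi$ of index $2$, and put it into Seifert form. Because $\varphi(h)=0$ the covering is fibre-preserving and is induced from the $2$-fold orbifold covering of the base $S^2(2,3,6)$ determined by $x_1\mapsto1,\ x_2\mapsto0,\ x_3\mapsto1$: there the order-$2$ cone point becomes smooth, the order-$6$ cone point is covered once by an order-$3$ point, and the order-$3$ cone point is covered twice, so the base of the cover is $S^2(3,3,3)$, with three exceptional fibres of order $3$. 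Computing the Euler number, $e=2\,e(M_5)=0$, the Seifert invariants of the cover reduce---after the orientation change used in Proposition \ref{lem:M3}---to those of $M_3=\{-1;(o_1,0);(3,1),(3,1),(3,1)\}$. (Alternatively: since $H^1(M_5;\Z_2)\cong\Z_2$ there is a unique double covering, and by the Bieberbach structure it is the one whose holonomy is the index-$2$ subgroup $\Z_3$ of the holonomy $\Z_6$ of $M_5$, hence $M_3$.) So the pair is $(M_3,\mu)$ with $\mu$ the deck involution.

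For the index I would apply Theorem \ref{thm:p2}(i): the $\Z_2$-index equals $1$ exactly when $\varphi$ factors through $\Z\twoheadrightarrow\Z_2$. I would exhibit the lift $\psi\colon\pi_1(M_5)\to\Z$ with $\psi(s_1)=3,\ \psi(s_2)=2,\ \psi(s_3)=1,\ \psi(h)=-6$: it kills every relator ($2\cdot3-6=0$, $3\cdot2-6=0$, $6\cdot1-6=0$, $3+2+1-6=0$, the commutators being trivial in $\Z$) and reduces modulo $2$ to $\varphi$, whence the index is $1$. As a cross-check, $M_5$ falls in Case 3 of Proposition \ref{prop:cube} with $d=2$, and $[\varphi]^3=(\varphi(s_1)\cdot1+\varphi(s_3)\cdot3)\hat\gamma=4\hat\gamma=0$, so by Theorem \ref{thm:p2}(ii) the index is not $3$. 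The only step needing real care is the identification of the cover: the Reidemeister--Schreier bookkeeping and, above all, normalizing the resulting Seifert invariants into the standard form of $M_3$ (with the orientation reversal, as in Proposition \ref{lem:M3}); the remaining computations are short and finite.
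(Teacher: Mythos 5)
Your proposal is correct and follows essentially the same route as the paper: the cover is identified as $M_3$ via Reidemeister--Schreier (the paper asserts this without detail), and the index is computed to be $1$ by exhibiting exactly the same lift $\psi(s_1)=3$, $\psi(s_2)=2$, $\psi(s_3)=1$, $\psi(h)=-6$ through $\Z\twoheadrightarrow\Z_2$. Your mod-$2$ abelianization establishing uniqueness of the epimorphism and your orbifold-covering/holonomy cross-checks are welcome details the paper leaves implicit.
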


\begin{proof}

 Reidemeister-Schreier algorithm gives $M_3$ as the total space of the double covering of $M_5$ determined by $\varphi$.

Define a factorization of $\varphi $ by $\psi(h)=-6$ and $\psi(s_1)=3, \psi(s_2)=2, \psi(s_3)=1$. The $\Z_2$-index of $((M_3,\mu);M_3)$ equals 1.

\end{proof}

\bigskip

We consider the following presentation of  $ M_6=\{-1;(n_2,1);(2,1),(2,1)\}$
\begin{eqnarray}
\pi_1(M_6)=\left<{\begin{matrix}s_1,s_2 \\v\\h\end{matrix}\left|
\begin{matrix}
 [s_k,h], k=\{1, 2\} ;& s_1^2h, s_2^2h\\
vhv^{-1}h&\\
s_1 s_2v^2h&\end{matrix}\right.}\right>.
\end{eqnarray}

\begin{prop}\label{lem:M6} 
 On $M_6$ we have  three  epimorphisms $\varphi: \pi_1(M_6) \to \Z_2$    given by:
$$\begin{array}{llll}
\varphi_1(s_1)=1,&\varphi_1(s_2)= 1,&\varphi_1(v)=0,&\varphi_1(h)=0\\
\varphi_2(s_1)=1,&\varphi_2(s_2)=1,&\varphi_2(v)=1,&\varphi_2(h)=0 \\
\varphi_3(s_1)=0,&\varphi_3(s_2)=0,&\varphi_3(v)=1,&\varphi_3(h)=0.
\end{array}$$

The three epimorphisms are equivalent. The double covering and free involution form the pair  $(M_2,\mu_5)$.  The $\Z_2$-index of  $((M_2,\mu_5);M_6)$ equals 2.

\end{prop}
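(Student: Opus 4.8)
\textbf{Proof proposal for Proposition \ref{lem:M6}.} The plan is to run the same machine used in the previous propositions: first enumerate all epimorphisms $\pi_1(M_6)\to\Z_2$, then sort them into equivalence classes under automorphisms of $\pi_1(M_6)$ realized by homeomorphisms (Lemma \ref{lem:equi} b), combined with the Second Bieberbach Theorem), then identify the total space of each double covering via Reidemeister--Schreier, and finally compute the $\Z_2$-index using Theorem \ref{thm:p2} and Proposition \ref{prop:cube}. To enumerate the epimorphisms, I would abelianize: from the relators $s_k^2h$ one gets $2\varphi(s_k)+\varphi(h)=0$ in $\Z_2$, i.e.\ $\varphi(h)=0$ (consistent with $vhv^{-1}h$, which over $\Z_2$ gives $2\varphi(h)=0$ automatically), and the long relator $s_1s_2v^2h$ gives $\varphi(s_1)+\varphi(s_2)=0$, so $\varphi(s_1)=\varphi(s_2)$. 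The only remaining freedom is the common value of $\varphi(s_1)=\varphi(s_2)\in\{0,1\}$ and $\varphi(v)\in\{0,1\}$, but the all-zero choice is not surjective, leaving exactly the three listed $\varphi_1,\varphi_2,\varphi_3$.

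Next I would exhibit automorphisms of $\pi_1(M_6)$ that permute these three. The substitution $s_1\mapsto s_1, s_2\mapsto s_2, v\mapsto vs_1, h\mapsto h$ (after checking it respects all relators, using $[s_1,h]$ and $s_1^2h$) changes $\varphi(v)$ by $\varphi(s_1)$, which converts $\varphi_1\leftrightarrow\varphi_2$; a substitution replacing $v$ by $vs_1s_2$ or composing with an inner automorphism handles $\varphi_2\leftrightarrow\varphi_3$. (Alternatively one can appeal directly to the realizability of the corresponding isomorphism of $\pi_1$ by a homeomorphism.) Hence all three $\varphi$'s are equivalent and define a single pair, which I will call $(M_2,\mu_5)$; the identification of the total space with $M_2$ is the Reidemeister--Schreier computation for, say, $\varphi_3$, whose Schreier transversal $\{1,v\}$ produces a presentation that one recognizes (possibly after a change of Seifert invariants as in \cite{orlik}, \cite{s}) as that of $M_2=\{-2;(o_1,0);(2,1),(2,1),(2,1),(2,1)\}$.

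For the index: since $\varphi(h)=0$ and the four exceptional fibers of $M_6$ have $a_k=2$ (two of them, plus after the computation the data of $M_6$ as a Seifert manifold), we have $d>0$, so we are in Case 3 of Proposition \ref{prop:cube}, giving $[\varphi_3]^3=\big(\sum\varphi_3(s_j)\tfrac{a_j}{2}\big)\hat\gamma$. With $\varphi_3(s_1)=\varphi_3(s_2)=0$ this cube vanishes, so by Theorem \ref{thm:p2}(ii) the index is not $3$. To rule out index $1$ I would show $\varphi_3$ does not factor through $\Z\twoheadrightarrow\Z_2$: any lift $\psi:\pi_1(M_6)\to\Z$ must satisfy $2\psi(s_k)+\psi(h)=0$, hence $\psi(h)$ is even, and combined with the $\Z$-version of the relator $vhv^{-1}h=1$, namely $2\psi(h)=0$ giving $\psi(h)=0$, forces $\psi(s_k)=0$, so $\psi$ cannot be surjective onto $\Z$ with nontrivial reduction mod $2$ matching $\varphi_3$; hence the index is $2$.

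\textbf{Expected main obstacle.} The delicate step is the Reidemeister--Schreier identification of the double cover with $M_2$: one obtains a raw presentation that must be massaged — introducing/eliminating generators, using the $[s_k,h]$ relators, and applying a Seifert-invariant normalization (orientation change) — before it is visibly $M_2$. The automorphism bookkeeping for the equivalence of $\varphi_1,\varphi_2,\varphi_3$ is routine but requires care to check each candidate substitution actually preserves every relator of the presentation of $\pi_1(M_6)$.
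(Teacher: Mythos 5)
Your overall strategy is the paper's: enumerate the epimorphisms by abelianizing, prove they are all equivalent via automorphisms of $\pi_1(M_6)$ (realizable by homeomorphisms thanks to Bieberbach), identify the double cover as $M_2$ by Reidemeister--Schreier, and get index $2$ from Case 3 of Proposition \ref{prop:cube} plus non-liftability to $\Z$. The enumeration (three epimorphisms, all with $\varphi(h)=0$ and $\varphi(s_1)=\varphi(s_2)$) and the index computation are correct and essentially identical to the paper's.

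The gap is in the equivalence step, which is the real content of this proposition. First, your substitution $v\mapsto vs_1$ (fixing $s_1,s_2,h$) is not an endomorphism: the long relator $s_1s_2v^2h$ is sent to $s_1s_2vs_1vs_1h$, which equals $h^{-1}(v^{-1}s_1vs_1)h$ in the group, and $v^{-1}s_1vs_1$ maps to $2[s_1]\neq 0$ in $H_1(M_6)\cong\Z_4\oplus\Z_4$ (conjugation by $v$ does not invert $s_1$ in the Hantzsche--Wendt group). This is why the paper's $\theta_1$ sends $v$ to $v'=s_2v$ \emph{and} simultaneously replaces $s_2$ by $v's_2^{-1}v'^{-1}$; only then do all relators survive. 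Second, and more fundamentally, the equivalence $\varphi_2\sim\varphi_3$ (equivalently $\varphi_1\sim\varphi_3$) cannot be obtained by any substitution that only modifies $v$, nor by composing with inner automorphisms: such maps do not change the value of the induced epimorphism on $s_1$, whereas $\varphi_2(s_1)=1\neq 0=\varphi_3(s_1)$, and inner automorphisms act trivially on $H^1(\cdot;\Z_2)$. The paper has to produce a genuinely nonobvious automorphism that \emph{swaps} the roles of $s_1$ and $v$, found by collapsing the presentation to the two-generator form $\langle s_1,v\mid s_1v^2s_1^{-1}=v^{-2},\ vs_1^2v^{-1}=s_1^{-2}\rangle$ and setting $\theta(s_1)=v$, $\theta(v)=s_1$, $\theta(s_2)=vs_1^{-2}$, $\theta(h)=v^{-2}$. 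Without an automorphism of this kind your argument establishes at most two equivalence classes, which would change the count of involutions on $M_2$ lying over $M_6$; so this step needs to be supplied, not just asserted.
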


\begin{proof}
 Define the morphism $\theta_1$ of $\pi_1(M_6)$ by $\theta_1(v)=v'$ with $v'= s_2v$,  $\theta_1(s_2)= v's_2^{-1}v'^{-1}$ and the identity on the others variable. It is easy to prove that $\theta_1$ is an automorphism and $\varphi_2\circ\theta_1=\varphi_1.$ Hence $\varphi_1$ and $\varphi_2$ are equivalent.
 
Using the following equalities of presentation of $\pi_1(M_6)$, we will explicit an automorphism $\theta$ of $\pi_1(M_6)$ such that $\varphi_1=\varphi_3\circ\theta$.

$\pi_1(M_6)=<s_1,s_2,v,h\tq [s_1,h]=[s_2,h]=1, s_1^2=s_2^2=h^{-1},vhv^{-1}=h^{-1}, s_1s_2v^2=h^{-1}>=
<s_1,s_2,v\tq s_1^2=s_2^2=vs_1^{-2}v^{-1}, s_2v^2=s_1>=
<s_1,v\tq s_1^2=(s_1v^{-2})^2=vs_1^{-2}v^{-1}>=<s_1,v\tq s_1v^2s_1^{-1}=v^{-2}, vs_1^2v^{-1}=s_1^{-2}>$.

Let us define $\theta$ by $\theta(s_1)=v, \theta(s_2)=vs_1^{-2}, \theta(v)=s_1, \theta(h)=v^{-2}$ and $\theta^{-1}(s_1)=v, \theta^{-1}(s_2)=s_1v^{-2}, \theta^{-1}(v)=s_1, \theta^{-1}(h)=s_1^{-2}.$ We verify that $\theta^{-1}$ is a morphism  inverse of the morphism $\theta$ and $\varphi_1=\varphi_3\circ\theta$. Hence the three epimorphisms are equivalent. 

 Reidemeister-Schreier algorithm gives $M_2=\{0;(n_2,0); \}$ as the total space of the double covering of $M_6$ determined by $\varphi_1$.

The epimorphism $\varphi_1$ does not admit a lift from $\pi_1(M_6)$ to $\Z$. We are in Case 3 of Proposition (\ref{prop:cube}) and $[\varphi_1]^3=0$. The $\Z_2$-index of  $((M_2,\mu_5);M_6)$ equals 2.

\end{proof}

\bigskip

We consider the following presentation of $N_1=\{0;(n_1,2);\}$:
\begin{eqnarray}
\pi_1(N_1)=\left<{\begin{matrix}v_1  ,v_2\\h\end{matrix}\left|
\begin{matrix}
 v_jhv_j^{-1}h^{-1},j=1,2&\\
v_1^2v_2^2&\end{matrix}\right.}\right>. 
\end{eqnarray} 

\begin{prop}\label{lem:N1} 
 On $N_1$ we have  seven  epimorphisms $\varphi: \pi_1(N_1) \to \Z_2$    given by:
$$\begin{array}{lll}
\varphi_1(v_1)= 1,&\varphi_1(v_2)=1,&\varphi_1(h)=1\\ 
\varphi_2(v_1)=1,&\varphi_2(v_2)=0,&\varphi_2(h)=1\\  
\varphi_3(v_1)=0,&\varphi_3(v_2)=1,&\varphi_3(h)=1\\  
\varphi_4(v_1)=0,&\varphi_4(v_2)=0,&\varphi_4(h)=1\\ 
\varphi_5(v_1)=1,&\varphi_5(v_2)=1,&\varphi_5(h)=0\\
\varphi_6(v_1)=1,&\varphi_6(v_2)=0,&\varphi_6(h)=0\\ 
\varphi_7(v_1)=0,&\varphi_7(v_2)=1,&\varphi_7(h)=0.
\end{array}$$ 
\begin{enumerate}
\item 
The epimorphisms $\varphi_1$ and $ \varphi_4$ are equivalent. The corresponding  double covering and free involution form the pair $(N_1, \mu _4)$.  The $\Z_2$-index of $((N_1,\mu_4);N_1)$ equals 1.
\item
 The epimorphisms $\varphi_2$ and $\varphi_3$ are equivalent. The corresponding  double covering and free involution form the pair $(N_2,\mu_2) $. The $\Z_2$-index of $((N_2,\mu_2);N_1)$ equals 3.
\item
 The epimorphism $\varphi_5$ has $(M_1,\mu_5)$  as corresponding double covering and free involution.   The $\Z_2$-index of $((M_1,\mu_5);N_1)$ equals 1.
\item
 The epimorphisms $\varphi_6$ and $ \varphi_7$ are equivalent. The corresponding  double covering and free involution are the pair $(N_1,\mu_6 )$. The $\Z_2$-index of $((N_1,\mu_6);N_1)$ equals 2.
\end{enumerate}
\end{prop}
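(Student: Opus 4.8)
The plan is to work through the four cases for the epimorphisms $\varphi_1,\dots,\varphi_7$ on $\pi_1(N_1)$ in parallel, following the same template used in Propositions \ref{lem:M1}--\ref{lem:M6}. For each equivalence class I would first exhibit an explicit automorphism $\theta$ of $\pi_1(N_1)=\langle v_1,v_2,h\mid [v_1,h],[v_2,h],v_1^2v_2^2\rangle$ realizing the claimed equivalence via Lemma \ref{lem:equi}(b) (valid here by the Second Bieberbach Theorem, as noted in the text). For item (1), since $v_1^2v_2^2=1$ forces $h=v_1^2v_2^2$ once... no: rather I would use the change of variables sending $v_1\mapsto v_1$, $v_2\mapsto v_2$, $h\mapsto h(v_1^{-2}v_2^{-2})$-type substitutions, or more simply an automorphism interchanging the roles of $h$ and a combination of $v_1,v_2$, to send $\varphi_1$ to $\varphi_4$; for item (2), an automorphism swapping or relabeling $v_1\leftrightarrow v_2$ (compatible with $v_1^2v_2^2=1$) sends $\varphi_2$ to $\varphi_3$; similarly for item (4) with $\varphi_6\mapsto\varphi_7$. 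The automorphism candidates should be checked to preserve the relators $[v_i,h]$ and $v_1^2v_2^2$, which is the routine but error-prone part.

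Next, for each class I would run the Reidemeister--Schreier algorithm on the index-two subgroup $\ker\varphi_i$ with Schreier transversal $\{1,t\}$ (where $t$ is any generator with $\varphi_i(t)=1$), extract a presentation, massage it into Seifert-invariant form, and match it against the list of ten Flat manifolds to identify the total space: $N_1$ for $\mu_4$ and $\mu_6$, $N_2$ for $\mu_2$, $M_1$ for $\mu_5$. Possibly a change of orientation (as in Proposition \ref{lem:M3}) is needed to recognize the invariants.

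Finally, for the $\Z_2$-index computations I would invoke Theorem \ref{thm:p2}. For $\mu_4$ ($\varphi_4$: $v_1,v_2\mapsto 0$, $h\mapsto 1$) I would show $\varphi_4$ factors through $\Z\twoheadrightarrow\Z_2$ by using the retraction $\pi_1(N_1)\twoheadrightarrow\langle h\rangle\cong\Z$ (killing $v_1,v_2$ is consistent with all relators), giving index $1$; same idea for $\mu_5$ ($\varphi_5$ kills $h$ and the abelianization has a $\Z$ summand detecting $v_1+v_2$). For $\mu_6$ ($\varphi_6$: $v_1\mapsto 1$, $v_2\mapsto 0$, $h\mapsto 0$), I would show no lift to $\Z$ exists (the relator $v_1^2v_2^2$ would force $2\psi(v_1)+2\psi(v_2)=0$, so $\psi(v_1)=-\psi(v_2)$, but then $\psi$ would need $\psi(v_1)\equiv 1\pmod 2$ and the $[v_i,h]$ relations are free of constraint — actually a lift does exist here, so I must instead argue the index is not $1$ by a more careful cohomological obstruction), then compute $[\varphi_6]^3$ using Proposition \ref{prop:cube}: $N_1$ is type $n_1$ (or $o_2$) with $d=0$; here $c$ and $\varphi_6(h)=0$ must be determined, and $\varphi_6(h)=0$ kills the class so $[\varphi_6]^3=0$, giving index exactly $2$. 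For $\mu_2$ ($\varphi_2$: $v_1\mapsto 1$, $v_2\mapsto 0$, $h\mapsto 1$) I would apply Proposition \ref{prop:cube} Case 1 with $\varphi_2(h)=1$ and verify $\tfrac c2+\varphi_2(v_1)+\varphi_2(v_2)$ is odd, so $[\varphi_2]^3\neq 0$ and the index is $3$.

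The main obstacle I anticipate is the bookkeeping in the Reidemeister--Schreier step combined with correctly identifying the resulting Seifert invariants — in particular getting the type ($n_1$ versus $o_2$), the base genus, and the sign/orientation conventions right so that the quotients are recognized as the specific manifolds $N_1,N_2,M_1$ rather than merely abstractly isomorphic ones — and, secondarily, pinning down the exact value of the invariant $c$ for $N_1$ so that Proposition \ref{prop:cube} gives the parity claimed; the index-$2$-versus-$1$ distinction for $\mu_6$ requires care since a naive lift-to-$\Z$ argument does not immediately settle it.
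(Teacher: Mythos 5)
Your overall strategy coincides with the paper's: exhibit explicit automorphisms of $\pi_1(N_1)$ to establish the equivalences (via Lemma \ref{lem:equi}(b) and the Bieberbach rigidity), identify the double covers by Reidemeister--Schreier, and compute the indices via Theorem \ref{thm:p2} and Proposition \ref{prop:cube}. However, there is a concrete error in your treatment of $\mu_6$ that leaves item (4) unproved. You correctly begin the lifting argument: any homomorphism $\psi\colon\pi_1(N_1)\to\Z$ must satisfy $2\psi(v_1)+2\psi(v_2)=0$, hence $\psi(v_1)=-\psi(v_2)$, so $\psi(v_1)$ and $\psi(v_2)$ have the \emph{same parity}; since $\varphi_6(v_1)=1\neq 0=\varphi_6(v_2)$, no lift of $\varphi_6$ through $\Z\twoheadrightarrow\Z_2$ exists. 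But you then assert ``actually a lift does exist here'' and retreat to an unspecified ``more careful cohomological obstruction.'' That assertion is false --- if a lift existed, Theorem \ref{thm:p2}(i) would force the index to be $1$, contradicting the very statement you are proving --- and the retreat leaves the lower bound (index $\geq 2$) unestablished, since the cup-cube computation $[\varphi_6]^3=0$ only yields the upper bound index $\leq 2$. The fix is simply to keep the parity argument you started with; the same observation explains why $\varphi_4$ and $\varphi_5$ do lift (there $\varphi(v_1)=\varphi(v_2)$) while $\varphi_2,\varphi_3,\varphi_6,\varphi_7$ do not.

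Beyond that, the proposal is a plan rather than a proof. The automorphisms realizing $\varphi_1\sim\varphi_4$, $\varphi_2\sim\varphi_3$ and $\varphi_6\sim\varphi_7$ are not written down or checked against the relators (the paper uses $\theta(v_1)=hv_1$, $\theta(v_2)=h^{-1}v_2$, $\theta(h)=h$ for the first, and $\theta(v_1)=v_1v_2v_1^{-1}$, $\theta(v_2)=v_1$, $\theta(h)=h$ for the other two); the Reidemeister--Schreier identifications of the total spaces as $N_1$, $N_2$, $M_1$ are only announced; and the value $c=0$, which you flag as an ``anticipated obstacle,'' is in fact immediate from $b=0$ and the absence of exceptional fibres in $\{0;(n_1,2);\}$, and is needed to apply Proposition \ref{prop:cube}, Case 1, 2) and conclude $[\varphi_2]^3\neq 0$ in item (2).
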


\begin{proof}
\begin{enumerate}
 \item Let us define an automorphism $\theta$ of $\pi_1(N_1)$ by $\theta(v_1)=hv_1$, $\theta(v_2)=h^{-1}v_2$ and $\theta(h)=h$. It verifies $\varphi_1\circ\theta=\varphi_4$. Hence $\varphi_1$ and $\varphi_4$ are equivalent.

 Reidemeister-Schreier algorithm gives $N_1$ as the total space of the double covering of $N_1$ determined by $\varphi_1$.

The morphism $\psi$ defined  by $\psi(v_1)=2, \psi(v_2)=-2 ,\psi(h)=1$ is a lift of $\varphi_1$. This implies that the $\Z_2$-index of $((N_1,\mu_4);N_1)$ equals 1.

\item Let us define an automorphism $\theta$ of $\pi_1(N_1)$ by $\theta(v_1)=v_1v_2v_1^{-1}$, $\theta(v_2)=v_1$ and $\theta(h)=h$. It verifies $\varphi_2\circ\theta=\varphi_3$. Hence $\varphi_2$ and $\varphi_3$ are equivalent.

 Reidemeister-Schreier algorithm gives $N_2$ as the total space of the double covering of $N_1$ determined by $\varphi_2$.

We are in Case 1, 2) of Proposition (\ref{prop:cube}) with $c=0$ and only one $\varphi_2(v_1)\neq 0$ hence $[\varphi_2]^3\neq 0 $ and the $\Z_2$-index of $((N_2,\mu_2);N_1)$ equals 3.

\item Reidemeister-Schreier algorithm gives $M_1$ as the total space of the double covering of $N_1$ determined by $\varphi_5$.

The morphism $\psi$ defined  by $\psi(v_1)=1, \psi(v_2)=-2,\psi(h)=0$ is a lift of $\varphi_5$. The $\Z_2$-index of $((M_1,\mu_5);N_1)$ equals 1.

\item   As above let us define an automorphism $\theta$ of $\pi_1(N_1)$ by $\theta(v_1)=v_1v_2v_1^{-1}$, $\theta(v_2)=v_1$ and $\theta(h)=h$. It verifies $\varphi_6\circ\theta=\varphi_7$. Hence $\varphi_6$ and $\varphi_7$ are equivalent.
Reidemeister-Schreier algorithm gives $N_1$ as the total space of the double covering of $N_1$ determined by $\varphi_6$.

The epimorphism does not admit a lift to $\Z$ and
we are in Case 1, 2) of Proposition (\ref{prop:cube}) but now $\varphi_6(h)=0$. It means that $[\varphi_6]^3=0$ and the $\Z_2$-index of $((N_1,\mu_6);N_1)$ equals 2.
\end{enumerate}
\end{proof}

\bigskip

 We consider the following presentation of $N_2=\{1;(n_1,2);\}$:
\begin{eqnarray}
\pi_1(N_2)=\left<{\begin{matrix}v_1  ,v_2\\h\end{matrix}\left|
\begin{matrix}
 v_jhv_j^{-1}h^{-1},j=1,2&\\
v_1^2v_2^2h^{-1}&\end{matrix}\right.}\right>. 
\end{eqnarray} 

\begin{prop}\label{lem:N2} 
 On $N_2$ we have  three  epimorphisms $\varphi: \pi_1(N_2) \to \Z_2$    given by:
$$\begin{array}{lll}
\varphi_1(v_1)=1,&\varphi_1(v_2)=1,&\varphi_1(h)=0\\ 
\varphi_2(v_1)=1,&\varphi_2(v_2)=0,&\varphi_2(h)=0\\
\varphi_3(v _1)=0,&\varphi_3(v_2)=1,&\varphi_3(h)=0.
\end{array}$$

\begin{enumerate}
\item

 The epimorphism $\varphi_1$ has $(M_1,\mu _1)$  as corresponding double covering and free involution.   The $\Z_2$-index of $((M_1,\mu_1);N_2)$ equals 1.

\item  The epimorphisms $\varphi_2$ and $\varphi_3$ are equivalent.  They have $(N_1,\mu_2) $  as corresponding double covering and free involution.  The $\Z_2$-index of $((N_1,\mu_2);N_2)$ equals 1.
\end{enumerate}
\end{prop}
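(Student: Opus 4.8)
The plan is to follow the template of Propositions \ref{lem:M1}--\ref{lem:N1}. First abelianise $\pi_1(N_2)$: the two commutator relators $v_jhv_j^{-1}h^{-1}$ become trivial and the long relator $v_1^2v_2^2h^{-1}$ yields $h=2v_1+2v_2$, so $H_1(N_2)\cong\Z^2$ with basis the classes of $v_1,v_2$. Hence $\mathrm{Hom}(\pi_1(N_2),\Z_2)\cong\mathrm{Hom}(\Z^2,\Z_2)$ has exactly three nonzero elements, distinguished by $(\varphi(v_1),\varphi(v_2))\in\{(1,1),(1,0),(0,1)\}$, and each of them kills $h$ since $\varphi(h)=2\varphi(v_1)+2\varphi(v_2)=0$; these are precisely $\varphi_1,\varphi_2,\varphi_3$.

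For the equivalence $\varphi_2\sim\varphi_3$ I imitate the proof of Proposition \ref{lem:N1}(2): define $\theta$ on generators by $\theta(v_1)=v_1v_2v_1^{-1}$, $\theta(v_2)=v_1$, $\theta(h)=h$. Checking that $\theta$ carries each relator to a consequence of the relators is routine given that $h$ is central; the only point needing care is $\theta(v_1)^2\theta(v_2)^2\theta(h)^{-1}=v_1v_2^2v_1h^{-1}$, which equals $1$ on substituting $v_2^2=v_1^{-2}h$ from the long relator and using centrality of $h$. One checks that $\theta^{-1}(v_1)=v_2$, $\theta^{-1}(v_2)=v_2^{-1}v_1v_2$, $\theta^{-1}(h)=h$ is a two-sided inverse, so $\theta$ is an automorphism, and $\varphi_2\circ\theta=\varphi_3$. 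Since the ten flat manifolds satisfy the Second Bieberbach Theorem, Lemma \ref{lem:equi}(b) then gives that the pairs attached to $\varphi_2$ and $\varphi_3$ are equivalent.

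To identify the total spaces I apply the Reidemeister--Schreier algorithm with Schreier transversal $\{1,v_1\}$ in each case; it is convenient first to eliminate $h=v_1^2v_2^2$, reducing the presentation to $\langle v_1,v_2\mid v_1v_2^2v_1^{-1}v_2^{-2},\,v_2v_1^2v_2^{-1}v_1^{-2}\rangle$. For $\varphi_1$ the Schreier generators may be taken as $A=v_2v_1^{-1}$, $B=v_1^2$, $C=v_1v_2$; rewriting the two relators conjugated by $1$ and by $v_1$ gives $[A,B]=[C,B]=[C,A]=1$ together with one further relator that becomes a consequence once $B$ is seen to be central, so $\ker\varphi_1\cong\Z^3=\pi_1(M_1)$ and the covering is $M_1$, yielding the pair $(M_1,\mu_1)$. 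For $\varphi_2$ the Schreier generators are $P=v_2$, $Q=v_1^2$, $R=v_1v_2v_1^{-1}$; the rewriting produces $R^2P^{-2}$, $[P,Q]$, $[Q,R]$ (the fourth relator being redundant once $P,Q$ commute), and the substitution $a=R$, $b=P^{-1}$, $h'=Q$ turns this into $\langle a,b,h'\mid[a,h'],[b,h'],a^2b^2\rangle=\pi_1(N_1)$, so the covering is $N_1$ and the pair is $(N_1,\mu_2)$. In both identifications the Second Bieberbach Theorem promotes the group isomorphism to a homeomorphism.

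Finally, by Theorem \ref{thm:p2}(i) the $\Z_2$-index is $1$ as soon as the epimorphism factors through the reduction $\Z\twoheadrightarrow\Z_2$, and since $\Z$ is abelian it suffices to produce a surjection $H_1(N_2)=\Z^2\twoheadrightarrow\Z$ reducing mod $2$ to $\varphi_i$: take $v_1\mapsto1,\,v_2\mapsto1$ (so $h\mapsto4$) for $\varphi_1$ and $v_1\mapsto1,\,v_2\mapsto0$ (so $h\mapsto2$) for $\varphi_2$. Hence both $\Z_2$-indices equal $1$. I expect the main obstacle to be the bookkeeping in the two Reidemeister--Schreier rewritings, and especially the recognition step: spotting that $B$ (resp.\ $Q$) becomes central so that the awkward fourth relator can be discarded, and finding the change of generators $(P,Q,R)\rightsquigarrow(R,P^{-1},Q)$ that exhibits $\ker\varphi_2$ as $\pi_1(N_1)$.
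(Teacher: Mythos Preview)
Your proof is correct and follows essentially the same route as the paper: the same automorphism $\theta$ is used for the equivalence $\varphi_2\sim\varphi_3$, the Reidemeister--Schreier identifications agree (you supply the details whereas the paper only records the outcome), and the $\Z_2$-indices are obtained via explicit lifts to $\Z$ as in Theorem~\ref{thm:p2}(i). Your lifts $(1,1,4)$ and $(1,0,2)$ are in fact cleaner than the ones printed in the paper, which contain minor arithmetic slips.
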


\begin{proof}
\begin{enumerate}
\item Reidemeister-Schreier algorithm gives $M_1$ as the total space of the double covering of $N_2$ determined by $\varphi_1$.

The epimorphism $\varphi_1$ admits a lift to $\Z$ defined by $\psi(v_1)=1$ and $\psi(v_2)=\psi(h)=0$. The $\Z_2$-index of $((M_1,\mu_1);N_2)$ equals 1.

\item As in the previous proposition, let us define an automorphism $\theta$ of $\pi_1(N_2)$ by $\theta(v_1)=v_1v_2v_1^{-1}$, $\theta(v_2)=v_1$ and $\theta(h)=h$. It verifies $\varphi_2\circ\theta=\varphi_3$. Hence $\varphi_2$ and $\varphi_3$ are equivalent.

 Reidemeister-Schreier algorithm gives $N_1$ as the total space of the double covering of $N_2$ determined by $\varphi_2$.

The epimorphism $\varphi_2$ admits a lift to $\Z$ defined by $\psi(v_1)=1,\psi(v_2)=0$ and $\psi(h)=-2$. The $\Z_2$-index of $((N_1,\mu_2);N_2)$ equals 1.\\
\end{enumerate}
\end{proof}

\bigskip

We consider the following presentation of $N_3=\{0;(n_3,2);\}$:
\begin{eqnarray}
\pi_1(N_3)=\left<{\begin{matrix}v_1  ,v_2\\h\end{matrix}\left|
\begin{matrix}
 v_1hv_1^{-1}h^{-1}, v_2hv_2^{-1}h&\\
v_1^2v_2^2&\end{matrix}\right.}\right>. 
\end{eqnarray} 

\begin{prop}\label{lem:N3} 
 On $N_3$ we have  seven  epimorphisms $\varphi: \pi_1(N_3) \to \Z_2$   given by:
$$\begin{array}{lll}
\varphi_1(v_1)=1,&\varphi_1(v_2)=1,&\varphi_1(h)=1\\
\varphi_2(v_1)=1,&\varphi_2(v_2)=0,&\varphi_2(h)=1\\  
\varphi_3(v_1)=0,&\varphi_3(v_2)=1,&\varphi_3(h)=1\\
\varphi_4(v_1)=0,&\varphi_4(v_2)=0,&\varphi_4(h)=1\\  
\varphi_5(v_1)=1,&\varphi_5(v_2)=1,&\varphi_5(h)=0\\ 
\varphi_6(v_1)=1,&\varphi_6(v_2)= 0,&\varphi_6(h)=0\\ 
\varphi_7(v_1)=0,&\varphi_7(v_2)=1,&\varphi_7(h)=0.
\end {array}$$
\begin{enumerate}
\item The epimorphisms $\varphi_1$ and $\varphi_2$ are equivalent.  They have $(N_4,\mu_1) $  as corresponding double covering and free involution.  The $\Z_2$-index of $((N_4,\mu_1);N_3)$ equals 2.

\item The epimorphisms $\varphi_3$ and $\varphi_4$ are equivalent.  They have $(N_3,\mu_3) $  as corresponding double covering and free involution.  The $\Z_2$-index of $((N_3,\mu_3);N_3)$ equals 3.

\item The epimorphism $\varphi_5$ has $(N_1,\mu_5) $  as corresponding double covering and free involution.   The $\Z_2$-index of $((N_1,\mu_5);N_3)$ equals 1.

\item The epimorphism $\varphi_6$ has $(N_3,\mu_6) $  as corresponding double covering and free involution.    The $\Z_2$-index of $(M_2\mu_6);N_3$ equals 2.
 
\item The epimorphism $\varphi_7$ has $(N_1,\mu_7) $  as corresponding  double covering and free involution.   The $\Z_2$-index of $((N_1,\mu_7);N_3)$ equals 2. 
\end{enumerate}
\end{prop}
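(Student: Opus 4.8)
The plan is to follow the same route as in Propositions \ref{lem:M1} through \ref{lem:N2}: enumerate the epimorphisms, collect them into equivalence classes by exhibiting automorphisms of $\pi_1(N_3)$, identify the total space of each double covering by the Reidemeister-Schreier algorithm, and finally read off the $\Z_2$-index from Theorem \ref{thm:p2} and Proposition \ref{prop:cube}.

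First I would observe that in the given presentation of $\pi_1(N_3)$ every relator is killed by any homomorphism into $\Z_2$: the commutator-type relator $v_1hv_1^{-1}h^{-1}$ vanishes, $v_2hv_2^{-1}h$ maps to $2\varphi(h)=0$, and $v_1^2v_2^2$ maps to $2\varphi(v_1)+2\varphi(v_2)=0$. Hence the epimorphisms are exactly the $2^3-1=7$ non-zero triples $(\varphi(v_1),\varphi(v_2),\varphi(h))$, which is the list given. For the equivalences the key device is the endomorphism $\theta$ of $\pi_1(N_3)$ sending $v_1\mapsto v_1$, $v_2\mapsto v_2h$, $h\mapsto h$; one checks that it respects the three relators (using $v_2hv_2^{-1}=h^{-1}$ for the last one) and that $v_1\mapsto v_1$, $v_2\mapsto v_2h^{-1}$, $h\mapsto h$ is its inverse, so $\theta$ is an automorphism. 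Since $\varphi\circ\theta$ replaces $\varphi(v_2)$ by $\varphi(v_2)+\varphi(h)$, composing with $\theta$ interchanges $\varphi_1$ with $\varphi_2$ and $\varphi_3$ with $\varphi_4$; by the Second Bieberbach Theorem (which makes every automorphism of $\pi_1(N_3)$ geometric) and Lemma \ref{lem:equi}(b) this yields the two asserted equivalences. The remaining inequivalences (that $\varphi_5,\varphi_6,\varphi_7$ lie in separate classes, distinct also from the first two classes) will follow a posteriori, once the quotient manifolds and indices are known, since both of these are invariants of the equivariant-homeomorphism class.

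Next, for the five representatives $\varphi_1,\varphi_3,\varphi_5,\varphi_6,\varphi_7$ I would run Reidemeister-Schreier on $\pi_1(N_3)$ with the transversal $\{1,v_1\}$, simplify, and identify the resulting presentations---after a change of orientation where needed, as in Proposition \ref{lem:M3}---with those of $N_4$, $N_3$, $N_1$, $N_3$, $N_1$ respectively. To confirm these identifications, and that the five classes are genuinely distinct, I would compute $H_1$ of the three candidate bases: $H_1(N_1)=\Z^2\oplus\Z_2$, $H_1(N_3)=\Z\oplus\Z_2\oplus\Z_2$, $H_1(N_4)=\Z\oplus\Z_4$, which are pairwise non-isomorphic, so $N_1,N_3,N_4$ are pairwise non-homeomorphic; the two classes with quotient $N_3$ (namely $\{\varphi_3,\varphi_4\}$ and $\{\varphi_6\}$) and the two with quotient $N_1$ (namely $\{\varphi_5\}$ and $\{\varphi_7\}$) are then separated by their indices, computed below.

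Finally, for the indices: $N_3$ is of type $n_3$ with $g=2$, no exceptional fibre (so $d=0$) and $b=0$ (so $c=0$), so Case 1, item 4) of Proposition \ref{prop:cube} gives $[\varphi]^3=\varphi(h)(\varphi(v_1)+1)\hat\gamma$. Thus $[\varphi_3]^3=\hat\gamma\neq0$, so $\varphi_3$ has index $3$, while $[\varphi_1]^3=[\varphi_5]^3=[\varphi_6]^3=[\varphi_7]^3=0$, so those have index $1$ or $2$, settled by whether $\varphi$ factors through $\Z\twoheadrightarrow\Z_2$ (Theorem \ref{thm:p2}(i)). Any lift $\psi\colon\pi_1(N_3)\to\Z$ must satisfy $\psi(h)=-\psi(h)$, hence $\psi(h)=0$, and $2\psi(v_1)+2\psi(v_2)=0$, hence $\psi(v_1)=-\psi(v_2)$; so a lift of $\varphi$ exists exactly when $\varphi(h)=0$ and $\varphi(v_1)=\varphi(v_2)$. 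This holds for $\varphi_5$ (take $\psi(v_1)=1$, $\psi(v_2)=-1$, $\psi(h)=0$), giving index $1$, and fails for $\varphi_1$ (since $\varphi_1(h)=1$) and for $\varphi_6,\varphi_7$ (since there $\varphi(v_1)\neq\varphi(v_2)$), giving index $2$ in these three cases. The main obstacle I anticipate is the bookkeeping of the Reidemeister-Schreier computations together with the correct identification of the simplified presentations with the standard Seifert invariants of $N_1,N_3,N_4$ up to orientation; the group-theoretic and cohomological parts are short verifications.
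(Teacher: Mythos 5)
Your plan follows the paper's route almost step for step: the same automorphism $\theta(v_1)=v_1$, $\theta(v_2)=v_2h$, $\theta(h)=h$ for the two equivalences, the same specialization of Proposition \ref{prop:cube} Case 1, 4) to $[\varphi]^3=\varphi(h)(\varphi(v_1)+1)\hat\gamma$, and the same lifting criterion ($\psi(h)=0$ and $\psi(v_1)=-\psi(v_2)$), all of which are correct and give the right indices $2,3,1,2,2$.

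There is, however, one genuine error: you identify the double covering determined by $\varphi_6$ as $N_3$, but it is $M_2$. (The statement's item (4) contains a typo --- note that its own index notation reads $(M_2\mu_6)$, and Theorem \ref{them:flat} B(3) together with the graph confirm that this cover is $M_2$, presented as $\{0;(n_2,2);\}$.) You can see the misidentification without any Reidemeister--Schreier bookkeeping: $\varphi_6$ kills $v_2$ and $h$ and is nonzero on $v_1$, which is exactly the orientation character of the type-$n_3$ manifold $N_3$ (the cross-cap generator $v_1$ preserves the fibre orientation and reverses the base, hence reverses the total orientation, while $v_2$ reverses both and $h$ preserves both). So $\ker\varphi_6$ is the fundamental group of the orientation double cover, which is orientable and therefore cannot be $N_3$. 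Carrying out Reidemeister--Schreier with transversal $\{1,v_1\}$ one finds generators $b=v_2$, $b'=v_1v_2v_1^{-1}$, $c=h$ with $bcb^{-1}=b'cb'^{-1}=c^{-1}$ and $b^2=b'^2$, i.e.\ the presentation of $\{0;(n_2,2);\}=M_2$. The sanity check you propose would not have caught this, because $H_1(M_2)\cong H_1(N_3)\cong\Z\oplus\Z_2\oplus\Z_2$; you would need an orientability (or $w_1$) argument, or the full Seifert-invariant identification, to separate these two candidates. Finally, since the Reidemeister--Schreier computations are only announced and not carried out, the identifications of the five total spaces are the part of the proposal that remains to be done; once the $\varphi_6$ case is corrected to $M_2$, the rest of the argument goes through as in the paper.
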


\begin{proof}
\begin{enumerate}
\item The bijection  $\theta(v_1)=v_1,\theta(v_2)=v_2h,\theta(h)=h$ defines an automorphism  $\theta$ of $\pi_1(N_2)$ such that $\varphi_1\circ\theta=\varphi_2$.  Hence the epimorphisms $\varphi_2$ and $\varphi_1$ are equivalent.

 Reidemeister-Schreier algorithm gives $N_4$ as the total space of the double covering of $N_3$ determined by $\varphi_1$.

The epimorphism $\varphi_1$ does not admit a lift to $\Z$. We are in Case 1 4) of Proposition (\ref{prop:cube}) with $c=0, g=2, \varphi_1(v_1)=1$ which implies $[\varphi_1]^3=0.$ The $\Z_2$-index of $((N_4,\mu_1);N_3)$ equals 2.

\item The same automorphism  $\theta$ of $\pi_1(N_2)$ verifies  $\varphi_3\circ\theta=\varphi_4$.  Hence the epimorphisms $\varphi_3$ and $\varphi_4$ are equivalent.

 Reidemeister-Schreier algorithm gives $N_3$ as the total space of the double covering of $N_3$ determined by $\varphi_3$.

We are in Case 1, 4) of Proposition (\ref{prop:cube}) with $c=0, g=2$ but now $\varphi_3(v_1)=0$. Hence the cup-cube $[\varphi_3]^3\neq 0$ and the $\Z_2$-index of $((N_3,\mu_3);N_3)$ equals 3.

\item    Reidemeister-Schreier algorithm gives $N_1$ as the total space of the double covering of $N_3$ determined by $\varphi_5$.

The epimorphism $\varphi_5$ admits a lift to $\Z$ defined by $\psi(v_1)=\psi(v_2)=1$ and $\psi(h)=0$. The $\Z_2$-index of $((N_1,\mu_5);N_3)$ equals 1.

\item   Reidemeister-Schreier algorithm gives $M_2$ as the total space of the double covering of $N_3$ determined by $\varphi_6$.

The epimorphism $\varphi_6$ does not admit a lift and $\varphi_6(h)=0$. Hence $[\varphi_6]^3=0$ and  the $\Z_2$-index of $((M_2,\mu_6);N_3)$ equals 2.

(\item Using Reidemeister-Shreier algorithm, we find $N_1$ is the total space of the double covering of $N_3$ determined by $\varphi_7$.

The epimorphism $\varphi_7$ does not admit a lift and $\varphi_7(h)=0$. Hence $[\varphi_7]^3=0$ and the $\Z_2$-index of $((N_1,\mu_7);N_3)$ equals 2.\\
\end{enumerate}
\end{proof}

\bigskip

We consider the following presentation of $N_4=\{1;(n_3,2);\}$:
\begin{eqnarray}
\pi_1(N_4)=\left<{\begin{matrix}v_1  ,v_2\\h\end{matrix}\left|
\begin{matrix}
 v_1hv_1^{-1}h^{-1}, v_2hv_2^{-1}h&\\
v_1^2v_2^2h^{-1}&\end{matrix}\right.}\right>. 
\end{eqnarray} 

\begin{prop}\label{lem:N4} 
 On $N_4$ we have  three  epimorphisms   $\varphi: \pi_1(N_4) \to \Z_2$  given by:
$$\begin{array}{lll}
\varphi_1(v_1)=1,&\varphi_1(v_2)=1,&\varphi_1(h)=0\\ 
\varphi_2(v_1)=1,&\varphi_2(v_2)=0,&\varphi_2(h)=0\\ 
\varphi_3(v_1)=0,&\varphi_3(v_2)=1,&\varphi_3(h)=0.
\end{array}$$
\begin{enumerate}
\item The epimorphism $\varphi_1$ gives as corresponding double covering and free involution the pair $(N_1,\mu_1)$. The $\Z_2$-index of $((N_1,\mu_1);N_4)$ equals 1.

\item The epimorphism $\varphi_2$ gives as corresponding double covering and free involution the pair $(M_2,\mu_2)$. The $\Z_2$-index of $((M_2,\mu_2);N_4)$ equals 2.

\item The epimorphism $\varphi_3$ gives as corresponding  double covering and free involution the pair $(N_1,\mu_3)$. The $\Z_2$-index of $((N_1,\mu_3);N_4)$ equals 2. 
\end{enumerate}
\end{prop}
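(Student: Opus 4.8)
The plan is to follow the three-step pattern used in Propositions \ref{lem:N1}--\ref{lem:N3}: first enumerate the epimorphisms $\pi_1(N_4)\twoheadrightarrow\Z_2$, then identify the double covering attached to each by the Reidemeister-Schreier method, and finally compute the $\Z_2$-index via Theorem \ref{thm:p2} and Proposition \ref{prop:cube}. For the enumeration: the image in $\Z_2$ of the relator $v_1^2v_2^2h^{-1}$ is $\varphi(h)$ (since $2x=0$ in $\Z_2$), so every homomorphism $\varphi\colon\pi_1(N_4)\to\Z_2$ must kill $h$; once $\varphi(h)=0$ the two remaining relators map to $0$ for arbitrary values of $\varphi(v_1),\varphi(v_2)$. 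Hence $\varphi$ is determined by $(\varphi(v_1),\varphi(v_2))\in\Z_2^2$, and it is onto exactly when this pair is nonzero, which gives precisely $\varphi_1,\varphi_2,\varphi_3$.

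For the double coverings I would run the Reidemeister-Schreier algorithm with Schreier transversal $\{1,v_1\}$ for $\varphi_1$ and $\varphi_2$ and $\{1,v_2\}$ for $\varphi_3$, obtain a presentation of $\ker\varphi_i=\pi_1$ of the total space, eliminate the redundant generators produced from $h$ and from one of the $v_k$, read off the Seifert invariants, and normalize them by a change of orientation in the sense of \cite{s} where needed. This identifies the total space as $N_1$ for $\varphi_1$ and for $\varphi_3$, and as $M_2=\{0;(n_2,2);\}$ for $\varphi_2$; a convenient consistency check is that the first homology of the cover is $\Z^2\oplus\Z_2\cong H_1(N_1)$ in the first two cases and $\Z\oplus\Z_2\oplus\Z_2\cong H_1(M_2)$ in the third. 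Since the ten flat manifolds satisfy the Second Bieberbach Theorem, Lemma \ref{lem:equi}(b) lets me match these pairs with the involutions $\mu_1$ on $N_1$, $\mu_2$ on $M_2$ and $\mu_3$ on $N_1$ already introduced; and since $\varphi_1,\varphi_2,\varphi_3$ have pairwise distinct invariants — distinct quotient coverings, or, for $\varphi_1$ versus $\varphi_3$, distinct $\Z_2$-indices — no two of them are equivalent on $N_4$.

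For the indices the key observation is that $N_4$ is of type $n_3$ with base genus $2$, no exceptional fibres and $b=1$, so in the notation of Definition \ref{defi:cd} one has $d=0$ and $c=b=1$, which is odd. Thus we are in Case 2 of Proposition \ref{prop:cube} and $[\varphi_i]^3=0$ for every $i$; by Theorem \ref{thm:p2}(ii) no index equals $3$, so by Theorem \ref{thm:p2}(i) the index is $1$ when $\varphi_i$ factors through $\Z\twoheadrightarrow\Z_2$ and $2$ otherwise. For $\varphi_1$, the map $\psi\colon\pi_1(N_4)\to\Z$ with $\psi(v_1)=1$, $\psi(v_2)=-1$, $\psi(h)=0$ is a well-defined homomorphism (the three relators map to $0$ in $\Z$) reducing mod $2$ to $\varphi_1$, so the index is $1$. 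For $\varphi_2$ and $\varphi_3$, any lift $\psi$ to $\Z$ must satisfy $\psi(h)=0$ (from $v_2hv_2^{-1}h$) and then $\psi(v_1)=-\psi(v_2)$ (from $v_1^2v_2^2h^{-1}$), which forces $\varphi_i(v_1)=\varphi_i(v_2)$; this fails for both $\varphi_2$ and $\varphi_3$, so no lift exists and the index is $2$ in each case.

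The only genuinely computational part, and hence the main obstacle, is the Reidemeister-Schreier step together with the recognition of the simplified kernel presentations as $\pi_1(N_1)$ and $\pi_1(M_2)$: one must track the orientation-normalization conventions for Seifert invariants rather than rely on the homology check alone. The enumeration of the epimorphisms and all the index computations are short once Proposition \ref{prop:cube} and Theorem \ref{thm:p2} are in hand.
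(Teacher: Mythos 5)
Your proposal is correct and follows essentially the same route as the paper: enumerate the three epimorphisms, identify the covers by Reidemeister--Schreier (asserted rather than computed in both your write-up and the paper's), and settle the index by exhibiting a lift to $\Z$ for $\varphi_1$ and ruling one out for $\varphi_2,\varphi_3$ while the cup-cube vanishes. The one small divergence is that you invoke Case 2 of Proposition \ref{prop:cube} (since $b=1$ gives $c=1$ odd for $N_4$) whereas the paper cites Case 1, 4) with $\varphi(h)=0$; both yield $[\varphi_i]^3=0$, and your reading of Definition \ref{defi:cd} is if anything the more literal one.
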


\begin{proof}
\begin{enumerate}
\item   Reidemeister-Schreier algorithm gives $N_1$ as the total space of the double covering of $N_4$ determined by $\varphi_1$.

The epimorphism $\varphi_1$ admits a lift $\psi$ on $\Z$ given by $\psi(v_1)=1,\psi(v_2)=-1,\psi(h)=0$. The $\Z_2$-index of $((N_1,\mu_1);N_4)$ equals 1.

\item  Reidemeister-Schreier algorithm gives $M_2$ as the total space of the double covering of $N_4$ determined by $\varphi_2$.

The epimorphism $\varphi_2$ does not admit a lift to $\Z$. We are in Case 1, 4) of Proposition (\ref{prop:cube}) with $\varphi_2(h)=0$. Hence $[\varphi_2]^3=0$ and the $\Z_2$-index of $((M_2,\mu_2);N_4)$ equals 2.

\item As for $N_3$, Reidemeister-Shreier algorithm gives $N_1$ is the total space of the double covering of $N_4$ determined by $\varphi_3$.

The epimorphism $\varphi_3$ does not admit a lift to $\Z$. We are again in  Case 1, 4) of Proposition (\ref{prop:cube}) with $\varphi_3(h)=0$. Hence $[\varphi_3]^3=0$ and the $\Z_2$-index of $((N_1,\mu_3);N_4)$ equals 2.
\end{enumerate}
\end{proof}

{\it Proof of Theorem \ref{them:flat}:}
\begin{proof} The proof  of part  $A)$- Given a free involution $\tau$ on $M_1$ then the orbit space $M_1/\tau$ is homeomorphic to a manifold $S$ where $S$ is one of the 10 Flat manifolds. Therefore the involution $\tau$ is equivalent to one of the involutions obtained by taking double covering of $S$. So we look at the Propositions from   5 to 14 and look where we have a free  involution on $M_1$. 
Namely, Proposition  \ref{lem:M1}
provides one equivalent classe of involutions on $M_1$ and this classe of involutions satisfies the properties of the item $(1)$ of
part $A)$.  
Proposition  \ref{lem:M2} item $(2)$ 
provides one equivalent classe of involutions on $M_1$ and this classe of involutions satisfies the properties of the item $(2)$ of
part $A)$.
Proposition  \ref{lem:N1} item $(3)$ 
provides one equivalent classe of involutions on $M_1$ and this classe of involutions satisfies the properties of the item $(3)$ of
part $A)$.  
  Proposition  \ref{lem:N2} item $(1)$ 
provides one equivalent classe of involutions on $M_1$ and this classe of involution satisfies the properties of the item $(4)$ of
part $A)$. 

The remain Propositions 7, 8, 9, 10, 13,14 do not provide free involutions on $M_1$ and the result follows. 
The proof of the other parts of the Theorem is completely similar of the proof of part $A)$ and we leave it for 
the reader. 
\end{proof}

\begin{rem} The computation of the $\Z_2$-indices proves that there does not exist an equivariant homeomorphism between the two double coverings   $(N_1,\mu_4)$ and $(N_1,\mu_6)$ of $N_1$ (see Proposition \ref{lem:N1}),
 $(N_1,\mu_5)$ and $(N_1,\mu_7)$ of $N_3$ (see Proposition \ref{lem:N3}), and $(N_1,\mu_1)$ and $(N_1,\mu_3)$ of $N_4$ (see Proposition \ref{lem:N4}).
\end{rem}

\bigskip

\author{Anne Bauval}\\
\address{\small Institut de Math\'ematiques de Toulouse\\
Equipe Emile Picard, UMR 5580\\
Universit\'e Toulouse III\\
118 Route de Narbonne, 31400 Toulouse - France\\
e-mail: bauval@math.univ-toulouse.fr}

\author{Daciberg L.\ Gon\c calves}\\
\address{\small Departamento de Matem\'atica - IME-USP\\
Rua~do~Mat\~ao~1010\\ 
CEP: ~05508-090 - S\~ao Paulo - SP - Brasil\\
e-mail: dlgoncal@ime.usp.br}

\author{Claude Hayat}\\
\address{\small Institut de Math\'ematiques de Toulouse\\
Equipe Emile Picard, UMR 5580\\
Universit\'e Toulouse III\\
118 Route de Narbonne, 31400 Toulouse - France\\
e-mail: hayat@math.univ-toulouse.fr}

\end{document}